\newtheorem{thm}{Theorem}
\newtheorem{lem}{Lemma}
\numberwithin{equation}{section}
\DeclareSymbolFont{bbold}{U}{bbold}{m}{n}
\DeclareSymbolFontAlphabet{\mathbbold}{bbold}
\renewcommand{\mod}[1]{\ \left(\textnormal{mod}\ #1\right)}
\renewcommand{\P}{\mathbb{P}}
\newcommand{\Q}{\mathbb{Q}}
\newcommand{\R}{\mathbb{R}}
\newcommand{\B}{\mathcal{B}}
\newcommand{\Z}{\mathbb{Z}}
\newcommand{\Zp}{\mathbb{Z}_{prim}}
\renewcommand{\l}{\left}
\renewcommand{\r}{\right}
\renewcommand{\b}{\mathbf}
\renewcommand{\c}{\mathcal}
\renewcommand{\epsilon}{\varepsilon}
\renewcommand{\gcd}{\textrm{gcd}}
\renewcommand{\u}{\widetilde{1}}
\renewcommand{\div}{\widetilde{\tau}}
\renewcommand{\leq}{\leqslant}
\renewcommand{\geq}{\geqslant}
\renewcommand{\#}{\sharp}
\renewcommand{\gg}{\ggg}
\renewcommand{\ll}{\lll}
\newcommand{\rr}{\widetilde{r}}
\newcommand{\pp}{\widetilde{\pi}}
\DeclareMathOperator*{\Osum}{\sum{}^*}
\begin{document}

\title{Rational Points on the Fermat Cubic Surface}

\author{Efthymios Sofos}
\address{School of Mathematics, University of Bristol, Bristol, BS8 1TW, United Kingdom}
\email{efthymios.sofos@bristol.ac.uk}

\begin{abstract}
We prove a lower bound that
agrees with Manin's prediction
for the number of rational points of bounded height
on the Fermat cubic surface. As an 
application we provide a simple 
counterexample to Manin's conjecture over $\Q.$
\end{abstract}

\subjclass[2010]{11D45 (11D25, 14G05)}

\maketitle

\section{Introduction}
\label{1s}
The subject of representing integers as a sum of two cubes
has a rich history in number theory. 
Examples such as
$91\!=\!6^3\!-\!5^3\!=\!4^3\!+\!3^3$
and
$1729\!=\!1^3\!+\!12^3\!=\!9^3\!+\!10^3$
make one wonder how often 
integers with at least two essentially distinct representations occur
and Euler had in fact showed that 
there are arbitrarily large such integers. 
Let $F\!=\!0$
denote the Fermat cubic surface,
where $F$ is given by
\[F:= x_0^3+x_1^3+x_2^3+x_3^3,\]
and notice that 
essentially non--distinct representations
give rise to elements $\b{x} \in \Z^4_{prim}$
on the lines 
\[x_0\!+\!x_1\!=\!x_2\!+\!x_3\!=\!0, \ \ \ \
x_0\!+\!x_2\!=\!x_1\!+\!x_3\!=\!0, \ \ \ \
x_0\!+\!x_3\!=\!x_1\!+\!x_2\!=\!0
\]
of the surface,
where $\Z_{prim}^4$ denotes 
the set of integer vectors $\b{x}$ such that
$\gcd(x_0,\ldots,x_3)=1.$
We are interested in estimating
the growth of 
the counting function 
\[
N\!\left(B\right)=
\#\l\{\b{x} \in \Zp^4,
F\!\left(\b{x}\right)\!=\!0, \b{x} \ \text{outside lines}, |\b{x}|\leq B\r\},
\]
where $|\cdot|$ is the usual supremum norm.

Hooley~\cite{hooley0}, building upon work of
Erd\"{o}s~\cite{erdos},
showed that
for almost all integers which are the sum of two cubes the representation is unique,
by proving that $N\!\l(B\r)\!=o\!\l(B^2\r).$
In a subsequent revisit of the subject~\cite[Th.3 \& Th.4]{hooley} 
he used Deligne's estimates along with sieve arguments to establish the stronger estimates
\begin{equation}
\label{hool}
B \!\ll \! N\!\left(B\right) \! \ll \! B^{\frac{5}{3}+\epsilon}
\end{equation}
for all $\epsilon\!>\!0$ and $B\!\geq \!6.$
These estimates
raised the question
of evaluating the true asymptotic order of $N\!\left(B\right).$
A conjectural answer was provided by 
a special case of a very general conjecture due to
Manin~\cite{fmt}.
Whenever  $f\!=\!0$ is a general smooth cubic surface
with a rational point, the conjecture states that
\[
N_f\!\l(B\r)
\sim
c_f B \left(\log B\right)^{\rho_f-1},
\]
as $B\to \infty,$
where $N_f\!\l(B\r)$ is defined similarly to $N\!\l(B\r)$ by excluding the $27$ lines contained in $f\!=\!0,$
$\rho_f \in \![1,7]$ is the rank of the Picard group of the surface
and $c_f$
is a positive constant.
It is worth noticing that 
although
the conjecture has been established for a large class 
of varieties, see for example~\cite{gauss},
it has never been
established for a single
smooth cubic surface.

In the case of the Fermat cubic surface the conjecture predicts the asymptotic behaviour
\[N\!\l(B\r)
\sim
c B \left(\log B\right)^{3},
\]
for some $c\!>\!0,$
since it is known that $\rho_{F}\!=\!4$ (see \S 8.3.1 in~\cite{quant}).
Wooley~\cite{wooley}
gave an elementary proof of the upper bound in~\eqref{hool}
and Heath--Brown~\cite{43},
building on this work,
made the improvement
\[N\!\left(B\right)\ll_{\epsilon}B^{\frac{4}{3}+\epsilon},
\]
for any $\epsilon\!>\!0,$ 
an estimate which actually applies to any smooth cubic surface with $3$ rational
coplanar lines.
Regarding lower bounds,
Slater and Swinnerton--Dyer~\cite{slat}
used a secant and tangent process
to obtain 
\begin{equation}
\label{slater}
N_f\left(B\right)\gg_f B \left(\log B\right)^{\rho_f-1},
\end{equation}
whenever the smooth cubic surface
$f=0$ contains two skew lines defined over the rationals.
The result does
not however
cover the case of the Fermat cubic surface since its
only skew lines are defined over
$\mathbb{Q}\left(\sqrt{-3}\right).$
Our main goal is to fill this gap and to improve optimally the lower bound in~\eqref{hool}.
\begin{thm}
\label{bonn}
We have the estimate
\[N\!\left(B\right)\gg B \left(\log B\right)^{3},
\] 
for all $B\geq 6.$
\end{thm}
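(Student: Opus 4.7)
The starting point is the factorisation $x_i^3+x_j^3 = (x_i+x_j)(x_i^2-x_ix_j+x_j^2)$, which rewrites the Fermat equation as
\[
(x_0+x_1)(x_0^2-x_0x_1+x_1^2) = -(x_2+x_3)(x_2^2-x_2x_3+x_3^2).
\]
This exposes a $\Q$-rational conic bundle structure on $S$: projecting from the rational line $L: x_0+x_1=x_2+x_3=0$ yields a dominant map $\pi: S\dashrightarrow \P^1_\Q$ whose fibre over $[s:t]\in\P^1(\Q)$ is a plane conic $C_{[s:t]}\subset S$, defined by polynomial equations in $(s,t)$ of bounded degree. My plan is to count rational points of bounded height on $S$ by parametrising them fibre-by-fibre through this bundle.

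For each $[s:t]$ at which $C_{[s:t]}$ is smooth and possesses a rational point $P_{[s:t]}$, I would use $P_{[s:t]}$ as a basepoint to identify $C_{[s:t]}(\Q)\cong\P^1(\Q)$ via a parametrisation by quadratic polynomials in a new parameter $[u:v]$ with coefficients polynomial in $(s,t)$. The anticanonical height on $S$ then satisfies $|\b{x}|\asymp H([s:t])\cdot H([u:v])^2$ modulo $(s,t)$-dependent factors, so that counting integer points $\b{x}$ with $|\b{x}|\leq B$ reduces to a dyadic lattice-point estimate in the parameter space. Beyond mere counting, one also needs to (i) produce, for a dense-enough set of $[s:t]$, an explicit section $P_{[s:t]}$ of bounded height (this invokes the Hasse principle for conics and a uniformity in $(s,t)$ of the local solvability conditions) and (ii) strip away contributions arising from the three rational trivial lines of $S$.

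A naive execution of this procedure for a single conic bundle produces only one logarithm, giving $\gg B\log B$. To recover the target $B(\log B)^3$ matching the Picard rank $\rho_F=4$, I would either combine the three conic bundles associated with the three rational trivial lines (each contributing a factor of $\log$ via Möbius-style inclusion-exclusion) or, equivalently, unfold the divisor sum coming from the coprimality decomposition $d=\gcd(x_0+x_1,x_2+x_3)$. The latter brings in two extra logarithms via classical multiplicative estimates for the simultaneous representability of the values of $x_i^2-x_ix_j+x_j^2$ by the norm form $X^2+3Y^2$ of $\Z[\omega]$ (which appears naturally since $x_i^2-x_ix_j+x_j^2$ is precisely the norm of $x_i+x_j\omega$).

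The main obstacle will be obtaining the sharp exponent $(\log B)^3$. Extracting each of the three logarithms requires a delicate accounting of the arithmetic structure: the coupling between the two simultaneous norm-form representations — one for $(x_0,x_1)$ and one for $(x_2,x_3)$, bound together by the Fermat equation — the divisor sum over $d$, and the lattice-point counting on the base $\P^1$ all interact, and one must show that in aggregate no logarithm is lost in the final summation. This is precisely the step where the conic bundle approach on a non-rational cubic surface becomes genuinely subtle, since the absence of $\Q$-rational skew lines prevents the direct application of the secant-and-tangent method of Slater--Swinnerton-Dyer.
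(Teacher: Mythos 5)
Your starting point --- exploiting the factorisation $x_i^3+x_j^3=(x_i+x_j)(x_i^2-x_ix_j+x_j^2)$ to put a $\Q$-rational conic bundle structure on the surface and to count fibre by fibre --- is exactly the route taken in the paper (which works with the equivalent model $x_0(x_0^2+3x_1^2)=x_2(x_2^2+3x_3^2)$ and the fibres $Q_{s,t}:(t^3-s^3)x^2-3sy^2+3tz^2=0$). However, as it stands your proposal has two genuine gaps. First, the mechanism you offer for the exponent $3$ is not correct: combining the three conic bundles attached to the three rational lines cannot "contribute a factor of $\log$ each", since each single fibration already accounts for essentially all points off the lines, so no inclusion--exclusion over the three bundles can multiply logarithms. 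In the correct bookkeeping a \emph{single} fibration suffices: for each fibre the product of local densities is bounded below by a divisor-type function of $t^3-s^3$ (essentially $\frac{\phi(n)}{n}2^{\omega(n)}$, reflecting the two factors $t-s$ and $t^2+ts+s^2$) times an isotropy detector built from the character $\chi_3$ on $st$; averaging this over $(s,t)$ of size $T$ yields $(\log T)^2$, and the third logarithm comes from summing over the dyadic sizes $T\leq B^{1/40}$ of the base point. Your alternative accounting ("one log from the naive count, two more from simultaneous representability by $X^2+3Y^2$") is not developed to a point where one can check that no logarithm is lost or gained, and your own admission that this is "the main obstacle" is precisely the content of the proof, not a detail.

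Second, the per-fibre step is much more delicate than you allow. Choosing a rational basepoint $P_{[s:t]}$ via the Hasse principle gives existence but no height control; one needs a Holzer/Cassels-type bound in terms of the discriminant $-9st(t^3-s^3)$, and even then the count of points of height at most $B$ on $Q_{s,t}$ carries a main term weighted by the real and $p$-adic densities of the particular conic, with an error that must be power-saving \emph{uniformly} in $(s,t)$ in order to survive summation over the base. The paper imports exactly such a uniform asymptotic (valid for $|s|,|t|\leq B^{1/40}$) from earlier work of the author, and then the whole difficulty is transferred to showing that the resulting sum of Hardy--Littlewood densities is $\gg(\log B)^3$: this requires (i) explicit lower bounds for $\sigma_p(s,t)$ in terms of the multiplicative functions above, after restricting to convenient classes such as $0<s<t$, $s,t\equiv 1\bmod 8$ to force local solubility at $2$ and $3$, and (ii) level-of-distribution estimates for $1\ast\chi_3$ in arithmetic progressions to moduli as large as a power of the length (proved via Poisson summation, Kloosterman sums and Weil's bound), fed into a two-variable divisor-sum evaluation. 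None of this machinery, nor any substitute for it, appears in your sketch, so the proposal does not yet amount to a proof of the lower bound $N(B)\gg B(\log B)^3$.
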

Our key ingredient here is earlier work of the author~\cite{sofos}.
It allows us to cover the surface with a family of conics and count rational points on each conic individually.
This approach leads to a sum of Hardy--Littlewood densities. We will show that
this sum behaves as a divisor sum involving binary forms in \S\ref{s3}.
This approach can be used 
to prove~\eqref{slater} for other
smooth cubic surfaces with a rational line,
a topic that we intend to return to in the future.
In fact,
we note that 
our method, when fully extracted, is capable of proving
\[N_f\!\l(B\r)\geq
\delta 
c_f 
B 
\l(\log B\r)^{\rho_f-1},
\]
for all large enough $B,$
where $\delta\!>\!0$ is a small absolute constant
and $c_{\!f}$ is the Peyre constant~\cite{peyre}.

We will use Theorem~\ref{bonn} in~\S\ref{s6}
in order to provide a simple counterexample over $\Q$
to the full 
version
of Manin's conjecture which is formulated for
general Fano varieties.
Let 
$k$ 
be a number field,
$Z$ be a Fano variety
over $k$
and $H$ be an anticanonical height function
on $Z.$
Suppose that
$Z\!\l(k\r)$ is Zariski dense
and define
the counting function 
\begin{equation}
\label{nam0}
N_{k}\l(U,B\r):
=\#\l\{x \in U\l(k\r), H\l(x\r)\leq B\r\}
\end{equation} 
for $B\geq 1,$
where $U$ is a Zariski open subset of $Z.$
A lack of the subscript $k$ will indicate that 
the counting is performed over the rationals.

Manin's conjecture~\cite{fmt} describes the growth rate
of $N_{k}\l(U,B\r)$ 
in terms of geometric invariants 
related to $Z.$ It
states that
there exists a Zariski open $U\subset Z$
and a constant $c=c\l(U,k, H\r)>0$
such that 
\begin{equation}
\label{nam}
N_k\l(U,B\r)\sim c B \l(\log B\r)^{\rho-1},
\end{equation}
as $B \to \infty,$
where 
$\rho$ is the rank of the Picard group of $Z.$

The conjecture does not hold in full generality, the first counterexample
having been
provided by Batyrev and Tschinkel~\cite{bat}. They
consider the biprojective hypersurface Fano cubic bundle 
$Y \!\subset 
\!
\mathbb{P}_{\!k}^3
\! \times \! 
\mathbb{P}_{\!k}^3$
given by 
\begin{equation}
\label{btt}
\sum_{i=0}^3 x_iy_i^3=0.
\end{equation}
It is shown that
if $k$ contains
a cube root of unity
and $U$ is any nonempty Zariski open subset of $Y$
we have
\[N_k\l(U,B\r) \gg B \l(\log B\r)^{3}
.\]
This estimate
disproves~\eqref{nam} 
since the Picard group of $Y$ is isomorphic to 
$\mathbb{Z}^2,$
as shown in~\cite[Prop.1.3]{bat}.

This result however leaves a counterexample 
over $\Q$ to be desired. This was achieved
by Loughran~\cite{dan}, where
Weil restriction was used to 
provide implicit
counterexamples
over any number field $k$
and of arbitrarily large dimension.
Further counterexamples 
related to the Peyre constant 
and the power of the logarithm
in~\eqref{nam}
are provided
by Browning and Loughran~\cite{timdan} and
Le Rudulier~\cite{cecile} respectively.

Our aim is to extend
the Batyrev--Tschinkel counterexample
over $\Q.$
\begin{thm}
\label{batt}
For any nonempty Zariski open $U\subset Y,$
where $Y$ is the
biprojective hypersurface 
given by~\eqref{btt},
we have
\[N\l(U,B\r)\gg B \l(\log B\r)^3,
\]
where the counting is performed over $\Q.$
\end{thm}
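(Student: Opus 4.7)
The plan is to exploit the cubic surface bundle structure given by the projection $\pi : Y \to \P^3$, $(\b{x}, \b{y}) \mapsto \b{x}$, and to invoke Theorem~\ref{bonn} on a suitable fibre. Write $\b{a}^3 := (a_0^3 : a_1^3 : a_2^3 : a_3^3)$ for $\b{a} = (a_0, a_1, a_2, a_3) \in \Zp^4$ with all entries nonzero; the fibre of $\pi$ above $\b{a}^3 \in \P^3(\Q)$ is the cubic surface $\sum_i a_i^3 y_i^3 = 0$, which is $\Q$-isomorphic to the Fermat cubic $F$ via the linear substitution $z_i = a_i y_i$. Under this isomorphism the three $\Q$-rational lines of $F$ listed in \S\ref{1s} correspond bijectively to three $\Q$-rational lines of the fibre; since the remaining $24$ lines of $F$ are only defined over $\Q(\sqrt{-3})$, those three are the only $\Q$-rational lines on any such fibre.

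To handle an arbitrary nonempty Zariski open $U \subset Y$, set $Z := Y \setminus U$. Since $Y$ is irreducible of dimension $5$ we have $\dim Z \leq 4$, and by upper semi-continuity of fibre dimension applied to $\pi|_Z$ the locus $W := \{\b{b} \in \P^3 : \pi^{-1}(\b{b}) \subset Z\}$ is Zariski closed in $\P^3$ and proper (else $\dim Z \geq 3+2 = 5$). I would therefore fix $\b{a}_0 \in \Zp^4$ with all entries nonzero such that $\b{a}_0^3 \notin W$, so that $F_0 := \pi^{-1}(\b{a}_0^3)$ is not contained in $Z$ and the intersection $F_0 \cap Z$ is a finite union of curves.

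Since adjunction gives an anticanonical class of bidegree $(3,1)$ on $Y$, any anticanonical height restricted to $F_0$ is comparable to $\|\b{a}_0\|^9 \|\b{y}\|$. Applying Theorem~\ref{bonn} to $F$ with the rescaled height threshold $B/\|\b{a}_0\|^9$ produces $\gg B(\log B)^3$ $\Q$-rational points on $F_0$ of anticanonical height at most $B$ and off the three $\Q$-rational lines of $F_0$. By the first paragraph, every irreducible $\Q$-rational curve contained in $F_0 \cap Z$ has degree at least $2$, and thus carries only $O(B^{1+\epsilon})$ points of bounded height. Subtracting these yields $N(U,B) \gg B(\log B)^3$.

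The main obstacle I anticipate is the rigidity statement that the only $\Q$-rational lines on a Fermat-like fibre are the images of the three lines in \S\ref{1s}: without it, $Z \cap F_0$ could in principle contain a rational line and destroy the lower bound. All remaining ingredients---the semi-continuity argument producing a good $\b{a}_0$, the anticanonical height computation, and the standard bound for rational points on rational curves of degree $\geq 2$---are routine.
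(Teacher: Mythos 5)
Your strategy is the same as the paper's: project $Y\to\P^3$ onto the $\b{x}$-coordinates, use that the cube-point locus $\{[t_0^3:\cdots:t_3^3]\}$ is Zariski dense while the locus of fibres contained in $Z=Y\setminus U$ is a proper closed subset (the paper phrases this via openness of the image of $U$ under the projection; your semicontinuity argument is an equivalent route), transfer Theorem~\ref{bonn} to the diagonal fibre $\sum_i a_i^3y_i^3=0$ through the substitution $z_i=a_iy_i$ with the anticanonical height distorted only by a constant depending on the fixed $\b{a}_0$ (the paper does this explicitly with the gcd computation in Lemma~\ref{fer}), and finally subtract the contribution of the curve components of $F_0\cap Z$. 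So the architecture is correct and matches \S\ref{s6}.

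Two remarks. First, a concrete quantitative slip: you bound the points of height at most $B$ on a degree $\geq 2$ curve by $O(B^{1+\epsilon})$ and then subtract; but $B^{1+\epsilon}$ dominates $B(\log B)^3$, so this subtraction annihilates your main term. What you need, and what is true, is $\ll_{C} B$: for a conic this follows from the degree-two parametrisation by $\P^1$, for irreducible curves of degree $\geq 3$ one even has $O_\epsilon(B^{2/3+\epsilon})$, and the paper simply quotes Walsh's uniform bound to get $N(C_i,B)\ll_{C_i}B$ for every non-line component. With that correction, and since there are $O(1)$ components, the subtraction is harmless. Second, the ``rigidity'' statement you single out as the main obstacle is both true and avoidable. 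It is true because the diagonal isomorphism over $\Q$ preserves fields of definition of lines and the $27$ lines of the Fermat cubic are explicit: only the three listed in \S\ref{1s} are defined over $\Q$, the other $24$ are defined over $\Q(\sqrt{-3})$ and each meets its Galois conjugate in at most one point, hence carries at most one rational point. But you can also sidestep it exactly as the paper does: compare the count on $U\cap F_0$ with the count on $F_0$ with \emph{all} $27$ lines removed. Then any line component of $F_0\cap Z$, rational or not, contributes nothing to the subtraction (its points are already excluded from the main count, up to $O(1)$ rational points on the non-rational lines), and only the non-line components remain, which are handled by the curve bound above. So no classification of rational lines on the fibre is actually required.
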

This estimate contradicts Manin's conjecture~\eqref{nam}
due to the incompatibility of logarithmic exponents.
Although we will not give details,
our method
is capable of proving that
Manin's conjecture
is not valid for~\eqref{btt} over any number field.

\textbf{Acknowledgements}: I would like to express my gratitude to Dr. D. Loughran for suggesting the 
application to counterexamples
and for various helpful discussions, 
to Dr. D. Schindler for comments on an earlier draft of this paper,
and to Prof. T. Browning, who suggested
the project to me and who has been most generous with his help
and advice at all stages of the work.
While working on this paper the author was supported by \texttt{EPSRC} grant
\texttt{EP/H005188/1}.

\textbf{Notation}:
For any functions
$f,g :[1,\infty) \to \mathbb{C},$
the equivalent notations
$f\!\l(x\r)\!=\!O_{\mathcal{S}}\l(g\!\l(x\r)\r),$
and
$f\!\l(x\r) \! \ll_{\mathcal{S}} \! g\!\l(x\r),$
will be used to denote the 
existence of a positive constant
$\lambda,$ which depends at most 
on the set of parameters $\mathcal{S}$
such that
for any
$x\geq 1$
we have
$\l|f\l(x\r)\r|\leq \lambda \l|g\l(x\r)\r|.$
Throughout sections
\S\ref{s3}--\S\ref{s5}
the absence of 
a subscript $\mathcal{S}$
will indicate 
that the implied constant
is absolute. As usually,
we denote
the Euler, M\"{o}bius and the divisor function
by $\phi,\mu$ and $\tau$ respectively, and we let 
$\omega\!\l(n\r)$ be the number of distinct prime divisors of $n.$
We shall make frequent use of the 
familiar estimate
\begin{equation}
 \label{eq:1}
  \tau\l(n\r) \ll_{\epsilon}  n^{\epsilon},
\end{equation} 
valid for any $\epsilon>0.$

\section{Covering by conics}
\label{s2}
The identity
$x^3+y^3=
\frac{1}{4}
\l(x+y\r)
\l(\l(x+y\r)^2
+3
\l(x-y\r)^2
\r)$
reveals that
the Fermat surface
$F\!=\!0$
is equivalent
over $\Q$
to 
\[X : \hspace{0.3cm}
x_0\l(x_0^2+3x_1^2\r)
=
x_2\l(x_2^2+3 x_3^2\r),\]
and for the purpose of proving Theorem~\ref{bonn}
it will suffice to prove that the counting function $N_X\!\l(B\r)$ associated to this equation
satisfies
\begin{equation}
\label{sweeney}
N_X\!\l(B\r)\gg
B \l(\log B\r)^3.
\end{equation}

Our proof
makes use of the conic bundle structure present in $X.$ More specifically, $X$ is equipped with a dominant morphism
$\pi:X \to \P_\Q^1 $
such that
\[ \pi\l(x\r)=\begin{cases}
   [x_0:x_2] & \text{if} \ \l(x_0,x_2\r)\neq 0  \\
   [x_2^2+3 x_3^2:x_0^2+3x_1^2] & \text{if} \ \l(x_0^2+3x_1^2,x_2^2+3 x_3^2\r) \neq 0 .
  \end{cases}
\]
It can easily be verified
that the fibres $\pi^{-1}\l([s\!:\!t]\r)$
are the diagonal conics given by
\[Q_{s,t}:
\l(t^3-s^3\r)x^2
+\l(-3s\r)y^2
+\l(3t\r)z^2=0.
\]
whose discriminant equals
\[\Delta_{s,t}=-9\l(t^3-s^3\r)st.
\]
Define 
for any
$\l(s,t\r) \in 
\Zp^2$
the following norm on 
$\mathbb{R}^3,$
\[\|\!\l(x,y,z\r)\!\|_{s,t}:=\!
\max
\Big\{
|xs|,|xt|,|y|,|z|
\Big\}
\]
and the following counting function
\[
M_{s,t}\!\l(B\r):=
\#\l\{\b{x} \in \Zp^3, 
Q_{s,t}\l(\b{x}\r)=0,
\|\b{x}\|_{s,t}
\leq B\r\}.\]
It is evident that
for any
$B\geq 1$
we have
\begin{equation}
\label{equal}
N_X\!\l(B\r)=
\hspace{-0.5cm}
\Osum_{\substack{
|s|,|t| \leq B \\
Q_{\!s,t} \ \text{isotropic}\\
Q_{\!s,t} \ \text{non-singular} 
}}
\hspace{-0.5cm}
M_{s,t}\!\l(B\r),
\end{equation}
where 
$\Osum_{\hspace{-0.2cm}s,t}$ denotes summation
over $\l(s,t\r) \in \Z_{prim}^2.$
Indeed, for any
$\b{x}$ counted by 
$N_X\!\l(B\r)$
we let
$x\!:=\!\gcd(x_0,x_2),$
thus getting
$\l(s,t\r) \in \Zp^2$ with $x_0\!=\!sx$ and $\ x_2\!=\!tx.$
This shows that
$\l(x,y,z\r)$ is a primitive integer zero 
of the conic
$Q_{s,t}$
with $\|\!\l(x,y,z\r)\!\|_{s,t}\leq B.$
The fact that
zeros $\b{x}$ on the $3$ rational lines
\begin{equation*}
x_0=x_2=0, \ \
x_0-x_2=x_1-x_3=0, \ \
x_0-x_2=x_1+x_3=0.
\end{equation*}
of $X$ 
correspond to the zeros $\l(x,y,z\r)$
of singular
conics $Q_{s,t}$
follows
upon noticing
that 
$\b{x}$ lies on any of these lines
if and only if
$x_0 x_2\l(x_2-x_0\r)\!=\!0,$
which in turn is equivalent to 
the vanishing of the discriminant $\Delta_{s,t}.$

We estimate $M_{s,t}\!\l(B\r)$ via~\cite[Th.1]{sofos},
for 
any $\l(s,t\r)$ 
in the range
with $|s|,|t|\leq B^{\delta},$
where
$\delta \! \in \l(0,\frac{1}{40}\r)\!.$
In the notation of 
the theorem,
we have
$
\langle Q_{s,t}\rangle \! \ll \! B^{\frac{3}{20}}$
and
$K_{s,t} \!=\!2,$
thereby yielding
\[M_{s,t}\!\l(B\r)\!=\!
\frac{1}{2}
\sigma_\infty\l(s,t\r)
\prod_p\sigma_p\l(s,t\r)
B+O\!\l(B^{\frac{19}{20}}\r),
\]
where the
implied constant
is absolute 
and
the Hardy--Littlewood
densities 
$\sigma_\infty\l(s,t\r),\sigma_p\l(s,t\r)$
are defined 
in~\eqref{hl}
and~\eqref{realhl}
respectively.
To apply this to ~\eqref{equal},
notice that since we are interested in obtaining a lower bound for
$N_X\!\l(B\r),$ any extra conditions can be freely
imposed on the
summation over 
$s$ and $t.$
Letting
\[
\mathfrak{G}\!\l(x\r):=
\frac{1}{2}
\hspace{-0.5cm}
\Osum_{\substack{
|s|,|t|\leq x \\
Q_{\!s,t} \ \text{isotropic}\\
Q_{\!s,t} \ \text{non-singular} 
}}
\hspace{-0.5cm}
\sigma_\infty\l(s,t\r)
\prod_p\sigma_p\l(s,t\r)
\]
we
obtain
\[
N_X\!\l(B\r)\geq
B \ 
\mathfrak{G}\!\l(\!B^\frac{1}{40}\!\r)
+
O\!\l(B\r).
\]
Now~\eqref{sweeney}
implies that
Theorem~\ref{bonn}
would follow from
\begin{equation}
\label{marl}
\mathfrak{G}\!\l(x\r)\gg \l(\log x\r)^3,
\end{equation}
for all $x\geq 2.$ Establishing this estimate is the goal 
of \S\ref{s3}--\S\ref{s5}.
\section{Passing from cubic surfaces to divisor sums}
\label{s3}
Our aim in this section is to show that the quantity
$\mathfrak{G}\!\l(x\r)$ is
approximated by a divisor sum.
We will do so by finding explicit lower bounds for the Hardy--Littlewood densities
uniformly with respect to $s$ and $t.$
The following lemma will be used to facilitate the estimation of 
the $p$-adic densities.
\begin{lem}
\label{hl0}
Let $q\!\l(\b{x}\r)\!:=\!\sum_{i=1}^3a_i x_i^2$ be an
integral smooth
quadratic form
and let $p$ be an odd prime dividing exactly one coefficient, say
$a_1.$ Then for any $n\geq 1,$ the number of solutions
of $q\!\l(\b{x}\r)\!\equiv \! 0 \hspace{-0.2cm} \mod{p^n}$ such that
$p$ and
$x_3$ are coprime is at least
\[\l(1+\l(\frac{-a_2 a_3}{p}\r)\!\r)\!\l(1-\frac{1}{p}\r)p^{2n}
.\]
\end{lem}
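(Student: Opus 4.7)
The plan is to parameterize the solutions of $q(\mathbf{x})\equiv 0\!\mod{p^n}$ by the pair $(x_1,x_3)$ with $\gcd(x_3,p)=1$, and then count the admissible values of $x_2$ for each such pair. This is natural because the coefficient $a_2$ is a unit mod $p$, so the equation is most easily solved for $x_2$.

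More precisely, I would fix an arbitrary $x_1\!\mod{p^n}$ (there are $p^n$ of them) and an $x_3\!\mod{p^n}$ coprime to $p$ (there are $p^{n-1}(p-1)$ of them). For each such pair, the congruence becomes
\[
x_2^2\equiv c\!\mod{p^n},\qquad c:=-a_2^{-1}\!\l(a_1 x_1^2+a_3 x_3^2\r),
\]
since $a_2$ is invertible mod $p^n$. Reducing this modulo $p$ and using $p\mid a_1$ but $p\nmid a_2 a_3 x_3$, we obtain $c\equiv -a_2^{-1}a_3 x_3^2\!\mod{p}$, which is a nonzero residue.

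Now I split into two cases based on the Legendre symbol. If $\l(\tfrac{-a_2a_3}{p}\r)=-1$, then $c$ is a nonsquare mod $p$, so the congruence $x_2^2\equiv c\!\mod{p^n}$ has no solutions, giving a count of zero — which matches the lower bound, since the prefactor $1+\l(\tfrac{-a_2a_3}{p}\r)$ vanishes. If instead $\l(\tfrac{-a_2a_3}{p}\r)=+1$, then $c$ is a nonzero square mod $p$, and by Hensel's lemma it lifts to exactly two square roots modulo $p^n$. Multiplying, the total count is at least
\[
p^n\cdot p^{n-1}(p-1)\cdot 2 \;=\; 2\l(1-\tfrac{1}{p}\r)p^{2n},
\]
which is precisely the claimed bound.

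There is no real obstacle here: the only thing to verify carefully is the applicability of Hensel's lemma, which needs the residue $c$ to be nonzero mod $p$ and needs $p$ to be odd (so that $2x_2$ is a unit whenever $x_2$ is). Both hypotheses are present, so the argument goes through uniformly in $n$.
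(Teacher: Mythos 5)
Your proof is correct. It takes a somewhat different route from the paper: the paper first disposes of the case $\left(\frac{-a_2a_3}{p}\right)=-1$ as trivial, then proves the bound by induction on $n$, counting the base case $n=1$ by hand via a square root $t$ of $-a_3 a_2^{-1}$ modulo $p$ and lifting each solution $\mathbf{x} \bmod p^n$ with $p\nmid x_3$ to $p^2$ solutions modulo $p^{n+1}$ through the multivariable form of Hensel's lemma (the gradient $2(a_1x_1,a_2x_2,a_3x_3)$ being a unit vector modulo $p$ thanks to $p\nmid 2a_3x_3$). You instead avoid induction altogether: you treat $x_1$ and the unit $x_3$ as free parameters and solve for $x_2$, reducing to the count of square roots of the unit $c\equiv -a_2^{-1}a_3x_3^2 \pmod p$ modulo $p^n$, which one-variable Hensel gives as exactly $1+\left(\frac{-a_2a_3}{p}\right)$ (two in the soluble case, zero otherwise, and indeed zero is the true count there since $p\nmid x_3$ forces $-a_2a_3$ to be a residue). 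Both arguments rest on the same two ingredients — the Legendre-symbol solubility criterion modulo $p$ and Hensel lifting for odd $p$ — and yield the same constant; yours is arguably more direct, makes the origin of the factor $1+\left(\frac{-a_2a_3}{p}\right)$ transparent, gives an exact count rather than a lower bound, and sidesteps the slight bookkeeping of the inductive lifting step.
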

\begin{proof}
It suffices to consider the case in which
$-a_2 a_3$ is a quadratic residue modulo $p$
since otherwise the statement is trivial.
Suppose therefore that there exists $t \in \Z/p\Z$
such that $a_2 t^2+a_3 \equiv 0\mod{p}.$ 
The proof is then completed via induction on $n$.

Letting $x_3$
run through the values $1,\ldots,p\!-\!1$ and $x_2\!=\!tx_3$
we deduce the validity of the statement for $n=1.$
Let us observe that each solution
$\b{x} \hspace{-0.12cm}\mod{\!p^n}$
of 
$q\l(\b{x}\r)\equiv 0 \hspace{-0.12cm}\mod{\!p^n}$
 with $p\nmid x_3$
satisfies the hypothesis of Hensel's lemma,
\[p\nmid \nabla\l(q\l(\b{x}\r)\r)=2
\l(a_1x_1,a_2x_2,a_3x_3\r)
,\]
and hence can be lifted to 
$p^2$ different
solutions 
$\b{y} \hspace{-0.12cm}\mod{\!p^{n+1}}$
of the equation
$q\!\l(\b{x}\r)\!\equiv \! 0 \hspace{-0.2cm} \mod{p^{n+1}}$
that must necessarily
satisfy
$p\nmid y_3.$ 
\end{proof}
Let us define
the arithmetic functions
\[
\div\l(n\r):=
\frac{\phi\l(n\r)}{n} 
2^{\omega\l(n\r)}\]
and
\[
\label{1}
\rr\l(n\r):=
\frac{\phi\l(n\r)}{n} 
\prod_{p|n}\Big(1+\chi_3\l(p\r)\Big),
\]
where
$\chi_3$
denotes
the
non--trivial character modulo $3,$
given by
\[\chi_3\l(n\r)=
  \begin{cases}
  0 & \text{if} \  n\equiv 0 \mod{3}  \\
  1 & \text{if} \  n\equiv 1 \mod{3}  \\
 -1 & \text{if} \  n\equiv 2 \mod{3} .
  \end{cases}
\]
For each $\l(s,t\r) \in \Z_{prim}^2$
and every prime $p$ 
we let
\[N_{s,t}^*\l(p^n\r):=\#\l\{\b{x} \mod{p^n}, 
Q_{s,t}\l(\b{x}\r)
\equiv 0 \mod{p^n}
,\  p\nmid\b{x}\r\}
\]
denote the number of primitive zeros $\hspace{-0.12cm} \mod{\!p^n}$
of 
the quadratic form $Q_{s,t}.$
It was shown in~\cite[Th.1]{sofos} that the following limit exists,
and 
its value
is referred to as
the Hardy--Littlewood $p$-adic
density, 
\begin{equation}
\label{hl}
\sigma_{p}\l(s,t\r)\!:=\!
\lim_{n\to \infty}N_{s,t}^*\l(p^n\r)p^{-2n}.
\end{equation}
The next lemma
reveals that
the evaluation 
of these
densities naturally gives birth to 
the function
$\rr$
which
plays the r\^{o}le of the detector
for the
conics $Q_{s,t}$
that are
isotropic over $\Q$.
\begin{lem}
\label{hl1}
Suppose that the integers
$s$
and $t$
satisfy 
$0\!<\!s\!<t,$
are coprime, and both equivalent to $1$ modulo $8.$
Then we have
\[
\prod_{p}
\sigma_p\!\l(s,t\r)
\gg
\div\!\l(t^3-s^3\r)
\rr\!\l(st\r)
,
\]
where the implied constant is absolute.
\end{lem}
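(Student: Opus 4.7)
The plan is to bound each local factor $\sigma_p(s,t)$ below individually and then multiply them. The primes split naturally into four classes according to how they meet the discriminant $\Delta_{s,t} = -9(t^3-s^3)st$: the exceptional primes $p \in \{2,3\}$, primes $p \mid t^3-s^3$ coprime to $6st$, primes $p \mid st$ coprime to $6$, and the generic primes $p$ coprime to $6st(t^3-s^3)$. Since $\gcd(s,t)=1$, no prime can lie simultaneously in the second and third classes, so these classes are disjoint.

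For the generic primes, the conic has nondegenerate reduction modulo $p$ and is automatically isotropic over $\mathbb{F}_p$ by the classical theory of quadratic forms in three variables; Hensel lifting then gives $\sigma_p = 1 - p^{-2}$, and the product $\prod_p (1-p^{-2}) = 6/\pi^2$ contributes an absolute positive constant. For primes $p \mid t^3-s^3$ with $p \nmid 6st$, exactly one coefficient of $Q_{s,t}$ (namely $t^3-s^3$) is divisible by $p$, so Lemma~\ref{hl0} applies and yields
\[
\sigma_p \geq \left(1 + \left(\tfrac{9st}{p}\right)\right)\left(1 - \tfrac{1}{p}\right) = \left(1 + \left(\tfrac{st}{p}\right)\right)\left(1 - \tfrac{1}{p}\right).
\]
The crucial observation will be that $\left(\tfrac{st}{p}\right) = 1$ whenever $p \mid t^3-s^3$: writing $t^3-s^3=(t-s)(t^2+st+s^2)$, either $t \equiv s \pmod{p}$, in which case $st \equiv s^2$ is trivially a square, or $t \equiv \omega s \pmod{p}$ for a nontrivial cube root of unity $\omega$, which forces $p \equiv 1 \pmod{6}$ and places the $3$-torsion of $(\Z/p\Z)^{\times}$ inside the index-$2$ subgroup of squares. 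Either way the factor $2(1-1/p) = \div(p)$ emerges.

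For primes $p \mid st$ with $p \nmid 6$, once more exactly one coefficient vanishes modulo $p$---namely $-3s$ if $p \mid s$, and $3t$ if $p \mid t$---and Lemma~\ref{hl0} applies after a harmless relabelling of indices. The resulting Legendre symbol simplifies to $\left(\tfrac{-3t^4}{p}\right)$ or $\left(\tfrac{-3s^4}{p}\right)$, both of which reduce to $\left(\tfrac{-3}{p}\right) = \chi_3(p)$, producing $\sigma_p \geq (1 + \chi_3(p))(1 - 1/p) = \rr(p)$.

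The main obstacle will be the exceptional primes $p = 2$ and $p = 3$, where Lemma~\ref{hl0} is unavailable and $p$-adic lifting is subtler. For $p = 2$ I would use the hypothesis $s \equiv t \equiv 1 \pmod{8}$, which forces $8 \mid t^3-s^3$ and pins down $-3s \equiv 5$, $3t \equiv 3 \pmod{8}$; the primitive triple $(0,1,1)$ satisfies $Q_{s,t}(0,1,1) = 3(t-s) \equiv 0 \pmod{8}$, while the gradient $(0,-6s,6t)$ at this point has $2$-adic valuation exactly $1$, so the strong form of Hensel's lemma lifts it $2$-adically and yields $\sigma_2 \gg 1$. For $p=3$ I would split according to which of $s$, $t$, $t-s$ is divisible by $3$, in each subcase factoring the appropriate power of $3$ out of the coefficients of $Q_{s,t}$ and exhibiting an explicit primitive solution modulo a suitable power of $3$ whose lift is controlled by Hensel, again giving $\sigma_3 \gg 1$. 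Multiplying the bounds from all four classes yields the claimed product lower bound.
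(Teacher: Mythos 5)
Your prime classification, the two applications of Lemma~\ref{hl0} with the ensuing Legendre symbol evaluations, and the treatment of $p=2$ are all sound and essentially reproduce the paper's argument (the paper computes $\left(\frac{st}{p}\right)=\left(\frac{s^3t}{p}\right)=\left(\frac{t^4}{p}\right)=1$ instead of your cube-root-of-unity case split, and exhibits the exact $\Q_2$-point $(0,\sqrt{9st},3s)$ rather than lifting $(0,1,1)$ by strong Hensel, but these are cosmetic differences). The genuine gap is at $p=3$. The claim that $\sigma_3\gg 1$ is false under the stated hypotheses: take $s=1$, $t=17$ (coprime, both $\equiv 1 \bmod 8$, $0<s<t$). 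Here none of $s$, $t$, $t-s$ is divisible by $3$ — a case your split does not even list — and the conic $4912x^2-3y^2+51z^2=0$ is anisotropic over $\Q_3$: any primitive zero must have $3\mid x$, and then $-y^2+17z^2\equiv 0 \bmod 3$ forces $3\mid y,z$ because $17\equiv 2$ is a nonresidue modulo $3$. So $\sigma_3=0$, and the lemma survives only because $\chi_3(17)=-1$ makes $\rr(st)=0$. More generally, isotropy of $Q_{s,t}$ over $\Q_3$ is not governed by which of $s,t,t-s$ is divisible by $3$, but by the residues modulo $3$ of the prime-to-$3$ parts of $s$ and $t$ — data which is invisible to your case split and which is exactly what ties the behaviour at $3$ to the vanishing of $\rr(st)$.

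To close the gap you must first dispose of the trivial case: assume $\div(t^3-s^3)\,\rr(st)>0$, which forces every prime $p\neq 3$ dividing $st$ to satisfy $\chi_3(p)=1$, hence the prime-to-$3$ parts of $s$ and $t$ are $\equiv 1 \bmod 3$; with that congruence information a direct $3$-adic analysis (splitting on $v_3(s)$, $v_3(t)$, $v_3(t-s)$ and their parities) does produce a nonsingular zero and $\sigma_3\gg 1$. Alternatively, argue globally as the paper does: once the trivial case is removed, your own symbol computations show $Q_{s,t}$ is isotropic over $\Q_p$ for every odd $p\neq 3$ (including the bad ones), it is isotropic over $\R$ and, by your $p=2$ argument, over $\Q_2$; Hilbert's product formula, i.e.\ the parity of the number of anisotropic places, then forces isotropy over $\Q_3$, whence $\sigma_3\geq 1-\tfrac{1}{9}$. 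Either repair is fine, but as written your sketch asserts a statement ($\sigma_3\gg1$ in all cases) that is simply not true.
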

\begin{proof}
We begin 
by calculating the 
$p$-adic densities for every prime $p\neq 2,3$
dividing the discriminant 
of the conic $Q_{s,t}.$
The coprimality of $s$ and $t$ ensures that
no two coefficients of the conic 
are divisible by $p$
and
hence we are allowed to
use Lemma~\ref{hl0}.

In the case that $p$ divides $s^3-t^3,$
we compute that 
\[\l(\!\frac{-\l(3t\r)\l(-3s\r)}{p}\!\r)=
\l(\frac{st}{p}\r)
=\l(\frac{s^3t}{p}\r)
=\l(\frac{t^4}{p}\r)=1,
\]
and hence we are provided with the estimate
$\sigma_p\l(s,t\r)\geq \div\!\l(p\r).$

The cases where $p|s$ or $p|t$ are symmetric and we therefore focus
on the latter. A similar computation yields
\[\l(\!\frac{-\l(t^3-s^3\r)\l(-3s\r)}{p}\!\r)
=
\l(\frac{-3s^4}{p}\r)
=
\l(\frac{-3}{p}\r)
.\]
Alluding to quadratic reciprocity reveals that
the value of 
the Legendre symbol $\!\l(\frac{-3}{p}\r)\!$
equals $\chi_3\l(p\r)\!,$
thereby yielding the estimate
$\sigma_p\!\l(s,t\r)\!\geq \! \rr\!\l(p\r).$

We recall at this point that if the conic $Q_{s,t}$ has zeros 
over $\Q_p$ 
then the $p$-adic density $\sigma_p\l(s,t\r)$ does not
vanish,
in which case
the estimate \[\sigma_p\l(s,t\r)
\geq
1-\frac{1}{p^2}
\]
follows from 
~\cite[Eq.(5.2)]{sofos}
or via Hensel lifting. Therefore the validity 
of the lemma
would follow upon 
showing that
the conic $Q_{s,t}$
is isotropic over $\Q_p,$
for all primes $p$
not considered so far.
 
If the conic is nonsingular over 
$\Q_p,$
i.e. $p\nmid \Delta_{s,t},$
then it is a well--known fact that
it is isotropic over $\Q_p.$
We are thus left
with
considering the cases $p=2$ and
$3.$
Hilbert's product formula,
implies that
it suffices to consider
solubility merely over $\Q_2.$
Recalling that $1+8\Z_2 \subseteq \Q_2^{\times 2}$
shows that $9st=u^2$
for some $u \in \Q^{\times}_2.$
We observe that 
$\l(0,u,3s\r)$
is a zero of $Q_{s,t}$ over $\Q_2,$
which concludes the proof of the lemma.
\end{proof}
We next turn our attention to the evaluation 
of the  archimedean
Hardy--Littlewood density. 
It is 
defined as 
\begin{equation}
\label{realhl}
\sigma_\infty\l(s,t\r):=
\lim_{\epsilon \to 0}
\frac{1}{2\epsilon}
\int_{\substack{
\\|Q_{s,t}\l(\b{x}\r)|\leq \epsilon
\\
\|\b{x}\|_{s,t}\leq 1
}}
1 \ 
\mathrm{d}\b{x}.\end{equation}
Let
$\B:=
\l(\frac{1}{4},\frac{1}{2}\r]
\times
\l(\frac{1}{2},1\r]
$
be an interval of $\R^2.$
\begin{lem}
\label{hl2}
Suppose that $\l(s,t\r) \in \Z_{prim}^2$ belongs to $\B 2^n$
for some natural number $n\geq 2.$
We have the estimate
\[\sigma_\infty\l(s,t\r)
\gg
4^{-n}
,
\]
with an absolute implied constant.
\end{lem}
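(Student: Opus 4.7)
The plan is to compute $\sigma_\infty(s,t)$ explicitly as a two-dimensional integral by slicing the quadric along the $y$-direction, then reduce it to a rotation-invariant form via a linear change of variables and restrict to a well-chosen square.

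First, I apply Fubini's theorem to the defining limit~\eqref{realhl}. For each fixed $(x,z)$ with $A:=(t^3-s^3)x^2+3tz^2>0$ the equation $Q_{s,t}(\mathbf{x})=0$ has the two real roots $y_\pm=\pm\sqrt{A/(3s)}$, and a standard linearisation shows that the one-dimensional $y$-measure of $\{|Q_{s,t}(\mathbf{x})|\leq\varepsilon\}$ equals $2\varepsilon/\sqrt{3sA}+O(\varepsilon^2)$. Dividing by $2\varepsilon$, letting $\varepsilon\to 0$, and imposing the norm condition $\|\mathbf{x}\|_{s,t}\leq 1$ together with $|y_\pm|\leq 1$, I obtain
\[
\sigma_\infty(s,t)=\int_D\frac{dx\,dz}{\sqrt{3s\bigl((t^3-s^3)x^2+3tz^2\bigr)}},
\]
with $D:=\{(x,z)\in\R^2:|xt|\leq 1,\,|z|\leq 1,\,(t^3-s^3)x^2+3tz^2\leq 3s\}$ (the constraint $|xs|\leq 1$ being automatic since $s<t$).

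Next, the substitution $X:=\sqrt{t^3-s^3}\,x$, $Z:=\sqrt{3t}\,z$ diagonalises the quadratic form inside the square root and produces
\[
\sigma_\infty(s,t)=\frac{1}{3\sqrt{st(t^3-s^3)}}\int_{D'}\frac{dX\,dZ}{\sqrt{X^2+Z^2}},
\]
where $D'$ is the image of $D$. I lower-bound this integral by restricting to the square $R':=[-\rho,\rho]^2$ with $\rho:=c\sqrt{t^3-s^3}/t$ for an absolute constant $c>0$. The inclusion $R'\subset D'$ demands $\rho\leq\sqrt{t^3-s^3}/t$ (automatic for $c\leq 1$), $\rho\leq\sqrt{3t}$ (also automatic for $c\leq 1$), and $2\rho^2\leq 3s$; the last is equivalent to $c^2\leq 3st^2/\bigl(2(t^3-s^3)\bigr)$, and since $t^3-s^3\leq t^3$ the right side is at least $3(s/t)/2>3/8$ by the hypothesis $(s,t)\in\mathcal{B}2^n$ (which forces $s/t>1/4$). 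So the choice $c:=\sqrt{3/8}$ works uniformly in $n$.

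The scaling $(X,Z)\mapsto(\rho X',\rho Z')$ yields $\int_{R'}dX\,dZ/\sqrt{X^2+Z^2}=\rho\int_{[-1,1]^2}dX'\,dZ'/\sqrt{X'^2+Z'^2}\gg\rho$, so
\[
\sigma_\infty(s,t)\gg\frac{\rho}{\sqrt{st(t^3-s^3)}}=\frac{c}{t\sqrt{st}}.
\]
Since $s,t\asymp 2^n$ on $\mathcal{B}2^n$ one has $t\sqrt{st}\asymp 4^n$, giving the claimed bound $\sigma_\infty(s,t)\gg 4^{-n}$. The main subtlety lies in the middle step: while $D'$ is morally a rectangle clipped by the disk $X^2+Z^2\leq 3s$, the shape of the clipping varies with the ratio $(t^3-s^3)/t^3$, which itself ranges widely over $(s,t)\in\mathcal{B}2^n$. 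Choosing $\rho$ proportional to the tighter of the two rectangular half-sides, namely $\sqrt{t^3-s^3}/t$, and using only the uniform bound $s/t>1/4$ built into $\mathcal{B}$, lets one avoid any case analysis.
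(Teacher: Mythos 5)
Your argument is correct, and it gets to the bound by a somewhat different route than the paper. The paper never evaluates $\sigma_\infty(s,t)$ exactly: it fixes $y\in[0,\tfrac14]$, observes that the slice $\{(x,z):|Q_{s,t}(x,y,z)|\leq\epsilon\}$ is an elliptical annulus of area at least $2\pi\epsilon/\sqrt{3t(t^3-s^3)}\geq \epsilon t^{-2}$, checks that this annulus sits inside the box $|tx|,|z|\leq 1$, and integrates over $y$ to conclude $\sigma_\infty(s,t)\geq \tfrac{1}{8}t^{-2}\gg 4^{-n}$. You instead integrate out $y$ first, obtaining the exact density $\int_D\bigl(3s\,A\bigr)^{-1/2}\,dx\,dz$ with $A=(t^3-s^3)x^2+3tz^2$, then diagonalise and restrict to a square of side $\asymp\sqrt{t^3-s^3}/t$, which after cancellation gives $\gg 1/(t\sqrt{st})\gg 4^{-n}$. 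Two remarks on the trade-off. First, your exact-formula step deserves one more sentence of justification: the interchange of the $\epsilon\to0$ limit with the $(x,z)$-integration is not uniform near $A=0$ and near the boundary $A=3s$; this is harmless because $A^{-1/2}$ is locally integrable and only a lower bound is needed (e.g.\ restrict to a compact subregion with $A$ bounded away from $0$ and $3s$, where the linearisation is uniform), but as written it is asserted rather than proved. Second, your version is actually more robust than the paper's: you use only $1/4<s/t<1$, which does follow from $(s,t)\in\mathcal{B}2^n$, whereas the paper's proof invokes the claim $t/4<s<t/2$, and the upper inequality $s<t/2$ does not in fact follow from $(s,t)\in\mathcal{B}2^n$ (take $s=2^{n-1}$, $t=2^{n-1}+1$); for $s/t$ close to $1$ the paper's inclusion of the outer ellipse in the box $|tx|\leq1$ can fail for $y$ near $\tfrac14$, while your choice $\rho=\sqrt{3/8}\,\sqrt{t^3-s^3}/t$ handles that regime uniformly, with the factor $\sqrt{t^3-s^3}$ cancelling in the end.
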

\begin{proof}
Assume that
$\epsilon \in \l(0,\frac{1}{2}\r)$
throughout the proof
and notice that the condition
$\l(s,t\r) \in \B 2^n$
implies that
$0<
\!
\frac{t}{4}
\!
<
\!
s
\!
<\frac{t}{2}.$
It is clear that
whenever 
$y\in [0,\frac{1}{4}]$
and
$(x,z)$ lies in the interior of the region 
$R\l(y,\epsilon\r)$
defined by
\[\l(3s\r)y^2-\epsilon
\leq
\l(t^3-s^3\r)x^2
+\l(3t\r)z^2
\leq
\l(3s\r)y^2
+\epsilon
\]
then
$tx$
and $z$
are both bounded in modulus by $1,$
thus leading to
\[
\int_{\substack{
\\|Q_{s,t}\l(\b{x}\r)|\leq \epsilon
\\
\|\b{x}\|_{s,t}\leq 1
}}
1
\mathrm{d}\b{x}
\geq
\int_0^{\frac{1}{4}}
\text{vol}\l(R\l(y,\epsilon\r)\r)
\mathrm{d}y.
\]
Noting that
for $a,b>0$
the area of the ellipse 
$ax^2+by^2=1$ 
is equal to
$\pi(ab)^{-\frac{1}{2}},$
we find that the volume of $R\l(y,\epsilon\r)$
is at least
$\epsilon t^{-2}.$
We therefore deduce that
\[
\frac{1}{2\epsilon}
\int_{\substack{
\\|Q_{s,t}\l(\b{x}\r)|\leq \epsilon
\\
\|\b{x}\|_{s,t}\leq 1
}}
1
\mathrm{d}\b{x}
\geq
\frac{1}{8t^2}
\]
which leads to the desired result.
\end{proof}
Recall that 
$\Osum_{\hspace{-0.2cm}s,t}$
denotes summation
over coprime integers $s$ and $t$
and define the
sum
\[
D\!\l(x\r):=
\hspace{-0.5cm}
\Osum_{\substack{\l(s,t\r) \in \B x\\
 s,t   \equiv 1 \hspace{-0.12cm}\mod{\!8}}}
\hspace{-0.3cm}
 \div\!\l(t^3-s^3\r)  
\rr\l(st\r)
\]
for any $x\geq 2.$
The following result is obvious.
\begin{lem}
\label{theem}
We have the estimate
\[\mathfrak{G}\!\l(x\r)\gg 
\hspace{-0.5cm}
\sum_{\substack{ n \in \mathbb{N} \\ n \in (1,\log_2 x]}}
\hspace{-0.3cm}
4^{-n} \
D\!\l(2^n\r),
\]
for all
$x\geq 4$
with an absolute implied constant.
\end{lem}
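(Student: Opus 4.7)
The plan is to combine Lemma~\ref{hl1} and Lemma~\ref{hl2} via a dyadic decomposition of the range of $(s,t)$. Since every summand in the definition of $\mathfrak{G}(x)$ is nonnegative, I am free to restrict the range of summation to obtain a lower bound. I will retain only those primitive $(s,t)$ with $0 < s < t$ and $s \equiv t \equiv 1 \mod{8}$, and then partition this restricted set according to the unique integer $n \geq 2$ with $(s,t) \in \B 2^n$. The upper constraint $n \leq \log_2 x$ guarantees that $|s|, |t| \leq 2^n \leq x$, so these pairs lie within the original summation range.

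Next I need to check that each retained pair is eligible, that is, $Q_{s,t}$ is both non-singular and isotropic over $\Q$. Non-singularity is immediate from $\Delta_{s,t}=-9(t^3-s^3)st$, which is nonzero precisely when $s$, $t$, and $t-s$ are all nonzero, and this is guaranteed by $0 < s < t$. For isotropy I invoke Hasse--Minkowski: the proof of Lemma~\ref{hl1} produces positive $p$-adic densities at every finite place under the stated hypotheses, with the congruence $s \equiv t \equiv 1 \mod{8}$ handling the awkward primes $p = 2$ and $3$, and real solubility is witnessed by the explicit construction inside the proof of Lemma~\ref{hl2}.

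With eligibility secured, I apply Lemma~\ref{hl1} to obtain $\prod_p \sigma_p(s,t) \gg \div\!\l(t^3-s^3\r)\rr\!\l(st\r)$ and Lemma~\ref{hl2} to obtain $\sigma_\infty(s,t) \gg 4^{-n}$ for each $(s,t) \in \B 2^n$ with $n \geq 2$. Multiplying these estimates, summing over the chosen $(s,t)$ inside a fixed block $\B 2^n$, and then summing over $n$ yields
\[
\mathfrak{G}(x) \gg \sum_{\substack{n \in \mathbb{N} \\ 1 < n \leq \log_2 x}} 4^{-n} D\!\l(2^n\r),
\]
with the factor $\tfrac{1}{2}$ from the definition of $\mathfrak{G}$ absorbed into the $\gg$ notation.

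The argument is essentially bookkeeping, so there is no serious obstacle; the only delicate point is the passage from local to global solubility of $Q_{s,t}$, which is precisely why the congruence $s \equiv t \equiv 1 \mod{8}$ was built into the definition of $D$ from the outset, ensuring that Lemma~\ref{hl1} applies and certifies solubility at the primes above $2$ and $3$.
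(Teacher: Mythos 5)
Your route is the intended one (use positivity to restrict the sum, split into the dyadic boxes $\B 2^n$, and multiply the bounds of Lemmas~\ref{hl1} and~\ref{hl2}), but the eligibility step rests on a false claim. It is not true that every retained pair gives a conic that is isotropic over $\Q$, and the proof of Lemma~\ref{hl1} does \emph{not} produce positive $p$-adic densities at every finite place: at a prime $p\neq 2,3$ with $p\mid st$ it only yields $\sigma_p\l(s,t\r)\geq \rr\l(p\r)$, which vanishes when $\chi_3\l(p\r)=-1$, and in that case $Q_{s,t}$ can genuinely be anisotropic over $\Q_p$. For instance $\l(s,t\r)=\l(9,17\r)\in \B 2^5$ satisfies all your conditions ($0<s<t$, coprime, $s\equiv t\equiv 1\mod{8}$), yet $17\equiv 2\mod{3}$ and $17$ divides $3t$ to the first power only; over $\Q_{17}$ the form $\l(t^3-s^3\r)x^2-3sy^2+3tz^2$ then has two unit coefficients whose relevant Legendre symbol is $\l(\frac{-3}{17}\r)=-1$ and one coefficient of odd $17$-adic valuation, so it is anisotropic over $\Q_{17}$, hence over $\Q$. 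Consequently your sum over a block is not a subsum of the sum defining $\mathfrak{G}$, and the sentence ``with eligibility secured'' hides a step that fails.

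The lemma still follows by your scheme once you add the observation the paper hints at when it calls $\rr$ the detector of isotropy: under your hypotheses the conic is soluble at $\infty$, at $2$, at every $p\nmid\Delta_{s,t}$ and at every $p\mid t^3-s^3$ with $p\neq 2,3$, while $p=3$ is controlled by Hilbert's product formula; so anisotropy over $\Q$ can only be caused by a prime $p\mid st$, $p\neq 2,3$, and anisotropy there forces $\chi_3\l(p\r)=-1$, whence $\rr\l(st\r)=0$. Therefore you should sum only over the retained pairs that are in addition isotropic and nonsingular (so that the terms really occur in $\mathfrak{G}$), apply Lemmas~\ref{hl1} and~\ref{hl2} to those, and then note that re-inserting the anisotropic pairs does not change the divisor sum, since each such pair contributes $0$ to $D\l(2^n\r)$ (equivalently, $\prod_p\sigma_p\l(s,t\r)=0$ for them). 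With that one observation your dyadic bookkeeping gives exactly the stated bound; without it the identification of your block sums with $D\l(2^n\r)$ counts pairs that are absent from $\mathfrak{G}\l(x\r)$.
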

In light of~\eqref{marl},
we are led to the conclusion that
Theorem~\ref{bonn}
would follow upon proving the estimate
\begin{equation}
\label{marl1}
D\!\l(x\r)
\gg
x^2\l(\log x\r)^3,
\end{equation}
for all $x\geq 4.$
Notice that this is 
a lower bound of the correct order of magnitude.
\section{Estimation of divisor sums}
\label{s4}
We gather level of distribution 
results regarding functions related to $\rr\l(n\r)$
before evaluating $D\!\l(x\r)$
in~\S\ref{s5}.
We denote the summation
over  residue classes $a \! \in \! [0,q)$
which are coprime to $q$
by 
$\Osum_{\hspace{-0.2cm}a\hspace{-0.12cm} \mod{\!q}}$
and the inverse of $a\hspace{-0.12cm} \mod{\!q}$
by $\bar{a}.$
The Kloosterman sums $S\l(a,b;c\r)$
are defined by
\[S\!\l(a,b;c\r)=
\Osum_{x \hspace{-0.12cm}\mod{\!c}}\!
e\!\l(\frac{ax+b\bar{x}}{c}\r),
\]
for any integers $a,b,c,$
where we use the notation
$e\!\l(z\r)\!:=e^{2 \pi i z}.$ 
The Weil bound~\cite[Eq.(1.60)]{iwa} states that
\begin{equation}
\label{weil}
|S\!\l(a,b;c\r)|\leq
\tau\!\l(c\r)\gcd(a,b,c)^{\frac{1}{2}}
|c|^{\frac{1}{2}}.
\end{equation}
\begin{lem}
\label{poisson1}
For any integers $a,q$ with $q$ coprime to $3a$ 
and any $X\geq 1,$
we have
\[\sum_{\substack{
n\leq X
\\
n\equiv a 
\hspace{-0.12cm}
\mod{\!q}}}
\hspace{-0.5cm}
\l(1\!\ast\!\chi_3\r)\!\l(n\r)
=
\frac{\pi}{3^{\frac{3}{2}}}
\tilde{c}\!\l(q\r)
\frac{X}{q}
+O\l(X^{\frac{1}{3}} 
q^{\frac{1}{2}} 
\tau^2\!\l(q\r)\r),
\] 
where 
$\ast$ denotes the Dirichlet convolution,
the implied constant is absolute
and
\[
\tilde{c}\!\l(q\r)
:=\sum_{d|q}
\frac{\chi_3\!\l(d\r)}{d}
\mu\!\l(d\r).
\]
\end{lem}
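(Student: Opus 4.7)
The plan is to reinterpret $(1\ast\chi_3)(n)$ geometrically and then apply two-dimensional Poisson summation, bounding the resulting exponential sums via the Weil estimate~\eqref{weil}. Since $\Q(\sqrt{-3})$ has class number one and unit group of order six, $6(1\ast\chi_3)(n)=\#\{(x,y)\in\Z^2:x^2+xy+y^2=n\}$ for every $n\geq 1$. Setting $Q(x,y):=x^2+xy+y^2$, the sum we wish to estimate thus equals $\tfrac{1}{6}$ times the number of $(x,y)\in\Z^2$ with $Q(x,y)\leq X$ and $Q(x,y)\equiv a\mod{q}$.

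The congruence $Q(x,y)\equiv a\mod{q}$ would be detected via additive characters, i.e.\ replaced by $q^{-1}\sum_{h\mod{q}}e(h(Q(x,y)-a)/q)$, reducing the problem to estimating, for each $h$, the exponential sum $T(h):=\sum_{(x,y)\in\Z^2,\,Q(x,y)\leq X}e(hQ(x,y)/q)$. Partitioning $\Z^2$ into residue classes $(x_0,y_0)\mod{q}$, writing each $(x,y)=(x_0,y_0)+q(x',y')$, and applying Poisson summation in $(x',y')\in\Z^2$ then yields
\[T(h)=\frac{X}{q^2}\sum_{(j,k)\in\Z^2}G(h,j,k;q)\,\hat g\!\l(\frac{j\sqrt X}{q},\frac{k\sqrt X}{q}\r),\]
where $G(h,j,k;q):=\sum_{(x,y)\mod{q}}e((hQ(x,y)+jx+ky)/q)$ is a generalised two-variable Gauss sum and $\hat g$ is the Fourier transform of the indicator of the ellipse $\{Q\leq 1\}$. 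The frequency $(j,k)=(0,0)$ supplies the main term: the $h$-average produces $q\,N(a,q)$, where $N(a,q)$ counts $(x,y)\mod{q}$ with $Q(x,y)\equiv a\mod{q}$; a local computation---the form has $p-\chi_3(p)$ solutions modulo $p$ for $p\nmid 3a$---combined with Hensel lifting gives $N(a,q)=q\tilde c(q)$. Together with $\hat g(0,0)=2\pi/\sqrt 3$ and the factor $\tfrac{1}{6}$, this produces the claimed main term $\pi X\tilde c(q)/(3^{3/2}q)$.

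For the error I would evaluate $G(h,j,k;q)$ with $\gcd(h,q)=1$ by completing the square via $4Q=(2x+y)^2+3y^2$, valid modulo the odd part of $q$; the $2$-adic factor is absorbed through the Chinese remainder theorem. The completed Gauss sum factors as a product of two one-variable Gauss sums of modulus $\sqrt q$, and summing against $e(-ah/q)$ over $h\mod{q}$ turns the expression into Kloosterman-type sums. The Weil bound~\eqref{weil} controls these by $\ll\tau(q)q^{1/2}\gcd(\cdot,q)^{1/2}$, with two successive applications accounting for the factor $\tau^2(q)$. The Bessel-type decay $\hat g(\xi,\eta)\ll(1+\xi^2+\eta^2)^{-3/4}$, a consequence of the nonvanishing curvature of $\partial\{Q\leq 1\}$, controls the $(j,k)$-sum, whose effective range of summation is $|(j,k)|\ll qX^{-1/2+\epsilon}$. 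The main obstacle is the careful truncation and balancing---most cleanly executed by first smoothing the sharp cutoff $Q\leq X$---required to produce simultaneously the $X^{1/3}$ exponent characteristic of the Gauss circle problem for the ellipse $Q\leq X$ and the precise $q^{1/2}\tau^2(q)$ saving.
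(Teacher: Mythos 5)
Your argument is sound in outline, but it reaches the lemma by a genuinely different route than the paper. The paper detects the congruence with additive characters exactly as you do, splits the frequencies $m$ according to $\gcd(m,q)=q/d$, and then invokes the ready-made twisted Voronoi-type summation formula of Iwaniec--Kowalski for $\sum_n (1\ast\chi_3)(n)e(kn/d)g(n)$ (their Eq.\ (4.70), with the Hankel-transform bound (4.44) and the weight $g$ of Cor.\ 4.9 applied from above and below to handle the sharp cutoff); the main term comes out through $L(1,\chi_3)=\pi 3^{-3/2}$ and Ramanujan sums $\sum^{*}_{k}e(-ka/d)=\mu(d)$, which is where $\tilde c(q)$ appears, while the dual terms collapse to Kloosterman sums $S(-a,\bar 3 n;d)$ over $d\mid q$, bounded by Weil, and the choice $Y=X^{1/3}$ gives the stated error. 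You instead rebuild that summation formula by hand: the identity $6(1\ast\chi_3)(n)=\#\{(x,y):x^2+xy+y^2=n\}$, two-dimensional Poisson summation over residue classes, evaluation of the two-variable Gauss sums by completing the square (with the $2$-adic factor via CRT), and again Kloosterman sums plus Weil. Your main-term bookkeeping is correct: $N(a,q)=q\tilde c(q)$ (from $p-\chi_3(p)$ solutions and Hensel lifting, including $p=2$) together with the area $2\pi/\sqrt 3$ reproduces $\pi 3^{-3/2}\tilde c(q)X/q$. What your route buys is self-containedness and transparency about where $\tilde c(q)$ comes from; what it costs is that you must redo the analytic work the paper imports wholesale, in particular the terms with $\gcd(h,q)>1$, which is where the divisor structure over $d\mid q$ and hence the factor $\tau^2(q)$ genuinely enter.

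One detail needs repair before the error term is secured. With the sharp cutoff, the decay $\hat g(\xi)\ll (1+|\xi|)^{-3/2}$ does not even give absolute convergence of the $(j,k)$-sum, so the smoothing you mention is mandatory rather than cosmetic, and your stated effective range $|(j,k)|\ll qX^{-1/2+\epsilon}$ is not what any admissible smoothing produces: smoothing the cutoff at width $Y$ in the $n$-variable (the analogue of the paper's parameter) truncates at $|(j,k)|\ll q\sqrt{X}/Y$, and the resulting dual contribution is $\ll \tau^2(q)q^{1/2}(X/Y)^{1/2}$ against a sandwiching cost $O(Y)$; taking $Y=X^{1/3}$ then recovers exactly $X^{1/3}q^{1/2}\tau^2(q)$, matching the paper's balance. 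With that correction, and upper/lower majorants so that the smoothed counts sandwich the sharp one, your argument goes through.
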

\begin{proof}We use the function 
$g$ defined in the proof of~\cite[Cor.4.9]{iwa}.
Introducing additive characters to detect the congruence
$n\equiv a \hspace{-0.12cm} \mod{q},$
we arrive at
the following upper bound for the sum
in the statement of the lemma,
\[\sum_{\substack{
n\leq X
\\
n\equiv a 
\hspace{-0.12cm}
\mod{\!q}}}
\hspace{-0.5cm}
\l(1\!\ast\!\chi_3\r)\!\l(n\r)
g\!\l(n\r)=
\frac{1}{q}
\sum_{m
\hspace{-0.12cm}
\mod{\!q}}
\hspace{-0.3cm}
e\!\l(\!-\frac{ma}{q}\r)
\sum_{n=1}^{\infty}
\l(1\ast\chi_3\r)\!\l(n\r)
e\!\l(\frac{mn}{q}\r)
g\!\l(n\r)
.
\]
We partition
the summation over
$m$ 
according
to the value of the 
$\gcd(m,q)\!=\!\frac{q}{d}$ which leads to
\[
\frac{1}{q}
\sum_{d|q}
\
\Osum_{k
\hspace{-0.12cm}
\mod{\!d}}
\hspace{-0.1cm}
e\!\l(\!-\frac{ka}{d}\r)
\sum_{n=1}^{\infty}
\l(1\!\ast\!\chi_3\r)\!\l(n\r)
e\!\l(\frac{kn}{d}\r)
g\!\l(n\r)
.\]
We will use~\cite[Eq.(4.70)]{iwa}
to estimate the inner sum. Notice that
although the function
$g\l(x\r)$ is not smooth, 
the statement is still valid.
Recall that the values 
of the $L$-function 
and the Gauss sum
corresponding to the character $\chi_3$
are $\frac{\pi}{3^{\frac{3}{2}}}$
and 
$i3^{\frac{1}{2}}$
respectively.
This yields
\begin{align*}
\sum_{n=1}^{\infty}
\l(1\!\ast\!\chi_3\r)\!\l(n\r)
&e\!\l(\frac{kn}{d}\r)
g\!\l(n\r)=
\frac{\pi}{3^{\frac{3}{2}}}
\frac{\chi_3\l(d\r)}{d}
\int_0^{\infty}
\hspace{-0.2cm}
g\!\l(x\r)\mathrm{d}x
\\
&-
\frac{2\pi i}{3^{\frac{1}{2}}}
\frac{\chi_3\l(d\r)}{d}
\sum_{n=1}^{\infty}
\l(1\!\ast\!\chi_3\r)\!\l(n\r)
e\!\l(\frac{\overline{(3k)}n}{d}\r)
h\!\l(\frac{4n}{3d^2}\r),
\end{align*}
where $h$ 
is defined in~\cite[Eq.(4.36)]{iwa}
and
denotes the Hankel type transform of $g.$
Interchanging the 
order of summation
gives rise to Kloosterman 
sums,
\begin{align*}
\Osum_{k
\hspace{-0.12cm}
\mod{\!d}}
\hspace{-0.3cm}
e\!\l(\!-\frac{ka}{d}\r)
&\sum_{n=1}^{\infty}
\l(1\!\ast\!\chi_3\r)\!\l(n\r)
h\!\l(\frac{4n}{3d^2}\r)
e\!\l(\frac{\overline{(3k)}n}{d}\r)
\\
=&\sum_{n=1}^{\infty}
\l(1\!\ast\!\chi_3\r)\!\l(n\r)
h\!\l(\frac{4n}{3d^2}\r)
S\!\l(-a,\bar{3}n;d\r).
\end{align*}
The
bound~\cite[Eq.(4.44)]{iwa}
when combined with~\eqref{weil}
proves that the last expression
is $\ll
\!
\tau\!\l(d\r)
d^{\frac{5}{2}}
(X/Y)^{\frac{1}{2}}
.$
Putting everything together
yields
\begin{align*}
\sum_{\substack{
n\leq X
\\
n\equiv a 
\hspace{-0.12cm}
\mod{\!q}}}
\hspace{-0.5cm}
\l(1\!\ast\!\chi_3\r)\!\l(n\r)
g\!\l(n\r)
&=
\frac{\pi}{3^{\frac{3}{2}}}
\frac{1}{q}
\l(
\int_0^{\infty}
\hspace{-0.2cm}
g\!\l(x\r)\mathrm{d}x\r)
\sum_{d|q}
\frac{\chi_3\!\l(d\r)}{d}
\Osum_{k
\hspace{-0.12cm}
\mod{\!d}}
\hspace{-0.3cm}
e\!\l(\!-\frac{ka}{d}\r)
\\
&+O
\Big(
(X/Y)^{\frac{1}{2}}
q^{\frac{1}{2}}
\tau^2\!\l(q\r)
\Big).
\end{align*}
This with 
$\Osum_{
\hspace{-0.2cm}
k
\hspace{-0.12cm}
\mod{\!d}}
\hspace{-0.05cm}
e\!\l(\!-\frac{ka}{d}\r)\!=\!\mu\l(d\r)$
and
$\int_0^{\infty}
g\l(x\r)\mathrm{d}x
\!=\!X+\frac{Y+1}{2}
$
proves the one-side estimate required for this lemma, on taking
$Y\!=\!X^{\frac{1}{3}}.$ The desired lower bound is obtained in an
identical way by
using the weight function
$g\!\l(x\r)\!=\!
\max\{x,1,\l(X-x\r)Y^{-1}\}$
instead.
\end{proof}
We will later need to 
estimate averages over arithmetic progressions
of functions belonging to a rather large class of multiplicative functions.
Our results are best formulated in terms 
of the group of functions
\[G\!:=\!\l\{
f\!:\!\mathbb{N} \to \mathbb{R}_{\geq 0}, f \ 
\text{multiplicative},f\!\l(p\r)\!=\!1\!+\!O_{\!f\!}\l(\frac{1}{p}\r) 
\text{for all primes} \ p\r
\}.
\] 
Note that
$G$
is an abelian group
with respect to
pointwise multiplication
and
contains elements
such as $\phi\!\l(q\r)\!/q$
and
the function $\tilde{c}\!\l(q\r)$ defined in 
Lemma~\ref{poisson1}. The bound~\eqref{eq:1}
reveals that each element $f \in G$ satisfies 
$\mu^2\!\l(n\r)\!f\!\l(n\r)\!\ll_{\epsilon,f}\!n^{\epsilon}$
for any $\epsilon\!>\!0.$
\begin{lem}
\label{tors}Let $f$ be a positive
function
such that
either
$f \! \in \!  G$
or
$$f\!\l(n\r)=g\!\l(n\r)\prod_{p|n}\l(1+\chi_3\!\l(n\r)\r)$$
for some
$g \!\in \! G.$
Then
there exists
a function
$\widehat{f}\! \in\! G$
and a positive constant $c\!\ll_{\!f\!}\!1,$
both of which depend on $f,$
such that
for any integers $q,\!a,\!k$ 
with $q$ coprime to $3ak$
and $x \!\geq\! 1,\epsilon\!>\!0$
we have
\[\sum_{\substack{n\leq x \\
n\equiv a \hspace{-0.12cm}\mod{\!q}
\\
\gcd(n,k)=1
\\
n \ \text{squarefree}}}
\hspace{-0.5cm}
f\!\l(n\r)=
c 
\frac{\phi\!\l(k\r)}{k}
\widehat{f}\!\l(kq\r)
\frac{x}{q}+
O_{\epsilon,f}\!\l(x^{\frac{1}{2}+\epsilon}q^{\frac{1}{2}}\tau^2\!\l(kq\r)\r).
\]

\end{lem}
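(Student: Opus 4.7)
The plan is to reduce the sum to two elementary building blocks---the count of squarefree integers in an arithmetic progression with an auxiliary coprimality condition in the first case, and the sum in Lemma~\ref{poisson1} in the twisted case---by means of a M\"{o}bius-type decomposition of $f.$ When $f\in G,$ I exploit the restriction to squarefree $n$ to write
\[\mu^2\!\l(n\r)f\!\l(n\r)=\mu^2\!\l(n\r)\prod_{p|n}\l(1+\l(f\!\l(p\r)-1\r)\r)=\mu^2\!\l(n\r)\sum_{d|n}h\!\l(d\r),\]
where $h\!\l(d\r):=\mu^2\!\l(d\r)\prod_{p|d}\l(f\!\l(p\r)-1\r)$ is multiplicative with $h\!\l(p\r)\ll_f 1/p.$ Writing $n=dm$ with $\gcd(d,m)=1$ and swapping the order of summation reduces matters to
\[\sum_{\substack{d\leq x\\ \gcd(d,kq)=1}}h\!\l(d\r)\sum_{\substack{m\leq x/d\\ \mu^2\!\l(m\r)=1\\ m\equiv a\bar{d}\mod{q}\\ \gcd(m,dk)=1}}1,\]
and the inner sum I would handle in the standard way: substitute $\mu^2\!\l(m\r)=\sum_{e^2|m}\mu\!\l(e\r),$ truncate at $e\leq\l(x/d\r)^{1/2},$ apply the trivial count in an arithmetic progression with coprimality, and bound the tail. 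The outcome is a main term of size $\l(x/d\r)/q$ times a multiplicative local factor at the primes of $dkq,$ with error $O\!\l(\l(x/d\r)^{1/2}\tau\!\l(dkq\r)\r).$

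Inserting this into the outer sum and extending the $d$-summation to infinity---which is legitimate since $\sum_{d}|h\!\l(d\r)|/d$ converges absolutely---assembles the main term into an Euler product. The contribution from primes $p\nmid kq$ provides the absolute constant $c,$ those from $p|k$ combine with the $\phi\!\l(k\r)/k$ factor coming from the coprimality condition on $m,$ and those from $p|q$ produce the remaining piece of $\widehat{f}\!\l(kq\r).$ A prime-by-prime check then yields $\widehat{f}\!\l(p\r)=1+O_f\!\l(1/p\r),$ so $\widehat{f}\in G.$ The total error is bounded by $\sum_d|h\!\l(d\r)|\l(x/d\r)^{1/2}\tau\!\l(dkq\r);$ invoking \eqref{eq:1} together with $|h\!\l(d\r)|\ll_{\epsilon,f}d^{\epsilon-1}$ and the crude bound $\tau\!\l(dkq\r)\leq\tau\!\l(d\r)\tau\!\l(kq\r)$ yields the stated error $O_{\epsilon,f}\!\l(x^{1/2+\epsilon}q^{1/2}\tau^2\!\l(kq\r)\r).$

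For the twisted case, where $f\!\l(n\r)=g\!\l(n\r)\prod_{p|n}\l(1+\chi_3\!\l(p\r)\r)$ with $g\in G,$ I would use that on squarefree $n$ one has $\prod_{p|n}\l(1+\chi_3\!\l(p\r)\r)=\l(1\ast\chi_3\r)\!\l(n\r).$ The same M\"{o}bius decomposition applied to $g$ reduces the inner sum to a variant of the sum in Lemma~\ref{poisson1}, after further M\"{o}bius inversion to remove the squarefree and coprimality conditions. The main term produced is proportional to $\tilde{c}\!\l(dkq\r)/\l(dkq\r),$ and since $\tilde{c}\in G$ the assembled main term is again of the required shape with some $\widehat{f}\in G.$ The error $\l(x/d\r)^{1/3}\l(dkq\r)^{1/2}\tau^2\!\l(dkq\r)$ supplied by Lemma~\ref{poisson1} is dominated, after summation against $|h\!\l(d\r)|,$ by the $x^{1/2+\epsilon}q^{1/2}\tau^2\!\l(kq\r)$ error already obtained in the first case.

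The principal obstacle is verifying that the assembled main term factorises in the form $c\,\widehat{f}\!\l(kq\r)\phi\!\l(k\r)/k\cdot x/q$ with $\widehat{f}\in G,$ rather than as a less structured bi-multiplicative expression in $k$ and $q.$ The decisive point is the small-prime estimate $\widehat{f}\!\l(p\r)=1+O_f\!\l(1/p\r),$ to be checked by expanding the local Euler factor at each prime and tracking how the hypothesis on $f\!\l(p\r)$ propagates through the M\"{o}bius decomposition and merges with the local factors arising from the $\phi\!\l(n\r)/n$ densities. The uniformity in $k,$ which enters only through a coprimality condition and must therefore be extracted from the Euler factors at primes dividing $k,$ is the main bookkeeping subtlety of the argument.
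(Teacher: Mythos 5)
Your strategy is sound and, for the harder $\chi_3$-twisted case, it is in substance the paper's own: both arguments write the conditioned summand as a rapidly decaying multiplicative factor convolved with $1\ast\chi_3$ and then feed the progression sum of $1\ast\chi_3$ into Lemma~\ref{poisson1}, so the Kloosterman--Weil input and the final Euler-product assembly coincide. The genuine difference is in the untwisted case $f\in G$: the paper simply quotes a lemma of de la Bret\`eche \cite{bret} with exponent $\tfrac12+\epsilon$, whereas you give a self-contained argument by squarefree counting in arithmetic progressions; that is a more elementary route and your error bookkeeping there is fine. Two points in your twisted case need more care than your sketch allows. First, ``further M\"obius inversion to remove the squarefree and coprimality conditions'' does not split cleanly, because $1\ast\chi_3$ is not completely multiplicative: extracting a divisibility condition from the weighted variable reintroduces coprimality constraints. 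The tidy implementation of exactly your plan is the paper's device of packing all side conditions into one function $\theta$ and convolving with $\mu\ast\mu\chi_3$, so that $\theta=\theta'\ast(1\ast\chi_3)$ with $\theta'$ having $O(1/p)$ values at primes away from $dk$; this is what makes the reduction to Lemma~\ref{poisson1} one line. Second, the modulus in every application of Lemma~\ref{poisson1} must remain $q$: your written error term $(x/d)^{1/3}(dkq)^{1/2}\tau^2(dkq)$ suggests a modulus $dkq$, and after summation against $|h(d)|$ this would leak a factor of order $k^{1/2}$, which the stated error $x^{1/2+\epsilon}q^{1/2}\tau^2(kq)$ does not tolerate; handled by convolution, the $d,k$ dependence enters only through divisor-type factors and the claimed uniformity in $k$ survives. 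Relatedly, the main term from Lemma~\ref{poisson1} carries $\tilde{c}(q)$ only, the $dk$-dependence arising from local densities of the shape $(1-1/p)(1-\chi_3(p)/p)$ at $p\mid dk$; since these are $1+O(1/p)$ per prime, your conclusion that the assembled $\widehat{f}$ lies in $G$ is correct.
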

\begin{proof}
The fact that $g \! \in \! G$ implies that 
in the second case we have
\[f\!\l(p\r)\!=\!1\!+\chi_3\!\l(p\r)+\!O_{\!f\!}\l(\frac{1}{p}\r)
\]
for all primes $p.$
Hence there exists
$\delta \in \{0,1\}$
such that
the quantity
\[M\!:=1+\sup_{p}\Big|p\l(f\l(p\r)-1-
\delta\chi_3\l(p\r)\r)\Big|
\]
is well--defined.
Define the function
$\theta$ by letting
$\theta\l(n\r)\!=\!f\!\l(n\r)\mu^2\!\l(n\r)$
for $n$ coprime to $k$ and  
$\theta\!\l(n\r)\!=\!0$
otherwise, therefore turning
the sum in the lemma
into \[\sum_{\substack{n\leq x \\ n \equiv a \hspace{-0.12cm}  \mod{\!q}}}
\!
\theta\!\l(n\r).\]
We define
$\theta'\!\l(n\r)\!:=\theta \ast \mu$
if $\delta=0$
and
$\theta'\l(n\r):=\theta\ast \mu \ast \mu\chi_3$
if $\delta=1.$
We can then verify 
by induction
that regardless of the value
of $\delta$
we have 
\[
|\theta'\!\l(p^k\r)| \leq
  \begin{cases}
   \frac{M}{p} & \text{if} \  k=1  \ \text{and} \ p\nmid a  \\
    3+2M & \text{if} \ k=2,3 \ \text{and} \ p\nmid a  \\
     2 &  \text{if} \ k=1,2 \  \text{and}\ p|a \\
     0 &  \text{if} \ k=3 \  \text{and}\ p|a \\
    0 & \text{if} \ k\geq 4.
  \end{cases}
\]
Hence for each $\epsilon>0$
we have the estimate
\begin{align*}
\sum_{n\leq x}
\frac{|\theta'\l(n\r)|}{n^{\frac{1}{2}+\epsilon}}
&\ll
\prod_{p\leq x}\l(1+\frac{3+3M}{p^{\frac{3}{2}+\epsilon}}+\frac{3+2M}{p^{1+2\epsilon}}\r)
\prod_{p|a}\l(1+\frac{4}{p^{\frac{1}{2}+\epsilon}}\r)
\\
&\ll_{\epsilon,M}
\tau\!\l(a\r)
.\end{align*}
Therefore the case $\delta\!=\!0$ of the lemma follows by~\cite[Lem.2]{bret} with $k=\frac{1}{2}+\epsilon.$
Hence let $\delta=1.$ 
Since
$\chi_3\mu$ is the inverse of $\chi_3,$
we deduce that
$\theta=\theta'\ast\l(1\ast\chi_3\r),$
which
leads to
\[\sum_{\substack{n\leq x \\ n \equiv a \hspace{-0.12cm}  \mod{\!q}}}
\hspace{-0.2cm}
\theta\!\l(n\r)
=\hspace{-0.35cm}
\sum_{\substack{n\leq x \\ \gcd(n,q)=1}}
\theta'\!\l(n\r)
\hspace{-0.2cm}
\sum_{\substack{m\leq x/n \\ nm \equiv a \hspace{-0.12cm}  \mod{\!q}}}\!
\hspace{-0.2cm}
\l(1\!\ast\!\chi_3\r)\!\l(m\r).
\]
An application of Lemma~\ref{poisson1}
yields
\[\frac{\pi}{3^{\frac{3}{2}}}
\frac{x}{q}
\tilde{c}\!\l(q\r)
\l(\!
\sum_{\substack{n\leq x \\ \  \gcd(n,q)=1}}\!
\frac{\theta'\l(n\r)}{n}\r)
+O_{\epsilon,M}\l(
x^{\frac{1}{2}+\epsilon}
q^{\frac{1}{2}}
\tau^2\!\l(aq\r)\r)
\]
valid for any $\epsilon\!>\!0.$
The sum over $n$ is absolutely convergent
and extending the summation to infinity
introduces a negligible error term.
Factoring the series into Euler products
reveals that
the statement of the lemma
holds
with the constant
\[
c:=\frac{\pi}{3^{\frac{3}{2}}}
\prod_{p}
\l(1+\frac{f\l(p\r)}{p}\r)
\l(1-\frac{1}{p}\r)
\l(1-\frac{\chi_3\l(p\r)}{p}\r)
\]
and the function
\[
\widehat{f}\l(n\r):=\tilde{c}\!\l(n\r)
\prod_{p|n}
\l(1+\frac{f\l(p\r)}{p}\r)^{-1}
\!\l(1-\frac{1}{p}\r)^{-1}
\!\l(1-\frac{\chi_3\l(p\r)}{p}\r)^{-1}\!.
\]
\end{proof}
We will later consider divisor sums
involving binary forms for which a two dimensional 
version of the previous lemma shall be required.
\begin{lem}
\label{coprim}
Let $f_1$ and $f_2$
be functions satisfying the assumption
of Lemma~\ref{tors}. Then
there exists
a  function
$\widehat{f} \in G$
and
a positive constant $c$,  both of which depend 
on $f_1$ and $f_2,$
such that
for any
$x,y\!\geq \!1, \epsilon \!>\!0$
and integers
$q,a,k,\sigma,\tau$ with
$q$ coprime to $3ak\sigma\tau$
we have
\[\!
\Osum_{\substack{
s \leq y,
t \leq x \\
\gcd(st,k)=1 \\
s,t \ \text{squarefree} \\
\l(s,t\r)\equiv \l(\sigma,\tau\r)\hspace{-0.12cm} \mod{\!q}
}}
\hspace{-0.5cm}
f_1\!\l(s\r)
f_2\!\l(t\r)
=
c \ 
\widehat{f}\!\l(kq\r)
\!\l(\!\frac{\phi\!\l(k\r)}{k}\!\r)^2
\!\frac{xy}{q^2}+O_{\epsilon,f_1,f_2}\!
\l(
\frac{xy}{{\min(x,y)}^{\frac{1}{2}-\epsilon}}
 \ q^{3}k^\epsilon\r).
\]
\end{lem}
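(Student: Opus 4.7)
The plan is to reduce the two-dimensional sum to a pair of one-dimensional sums handled by Lemma~\ref{tors}, with the coprimality constraint $\gcd(s,t)=1$ removed by M\"obius inversion. Writing $[\gcd(s,t)=1]=\sum_{d\mid\gcd(s,t)}\mu(d)$ and substituting $s=du$, $t=dv$, the squarefreeness of $s$ and $t$ forces $d$ squarefree with $\gcd(d,uv)=1$, and $f_i(du)=f_i(d)f_i(u)$ by multiplicativity. Since $q$ is coprime to $\sigma\tau$, any common prime of $d$ and $q$ would force $p\mid\sigma$ or $p\mid\tau$, so only $d$ with $\gcd(d,kq)=1$ contribute. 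The congruences become $u\equiv\bar d\sigma\pmod q$ and $v\equiv\bar d\tau\pmod q$, where $\bar d$ is the inverse of $d$ modulo $q$. This recasts the double sum as
\[
\sum_{\substack{d\leq\min(x,y)\\ d\text{ sqfree}\\ \gcd(d,kq)=1}}\mu(d)f_1(d)f_2(d)
\Biggl(\hspace{0.1cm}\sum_{\substack{u\leq y/d,\,u\text{ sqfree}\\ \gcd(u,dk)=1\\ u\equiv\bar d\sigma\hspace{-0.12cm}\mod q}}\hspace{-0.4cm}f_1(u)\Biggr)
\Biggl(\hspace{0.1cm}\sum_{\substack{v\leq x/d,\,v\text{ sqfree}\\ \gcd(v,dk)=1\\ v\equiv\bar d\tau\hspace{-0.12cm}\mod q}}\hspace{-0.4cm}f_2(v)\Biggr).
\]

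Next I would apply Lemma~\ref{tors} to each inner sum with the coprimality parameter $k$ replaced by $dk$. The main term of the inner $u$-sum is $c_1\frac{\phi(dk)}{dk}\widehat{f_1}(dkq)\frac{y}{dq}$, and similarly for $v$. Collecting main-term$\times$main-term contributions over $d$ yields
\[
c_1c_2\Bigl(\frac{\phi(k)}{k}\Bigr)^{\!2}\frac{xy}{q^2}\sum_{\substack{d\geq 1,\,d\text{ sqfree}\\ \gcd(d,kq)=1}}\frac{\mu(d)f_1(d)f_2(d)}{d^2}\Bigl(\frac{\phi(d)}{d}\Bigr)^{\!2}\widehat{f_1}(dkq)\widehat{f_2}(dkq).
\]
The series in $d$ is absolutely convergent by the $G$-growth bound $f_i(d)\ll d^\epsilon$ and $|\widehat{f_i}(d)|\ll d^\epsilon$; extending it to infinity costs a negligible amount and the infinite series factors as an Euler product. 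Splitting that Euler product into its $p\mid kq$ and $p\nmid kq$ parts produces, after routine manipulation, a constant $c$ depending only on $f_1,f_2$ together with a multiplicative function $\widehat{f}(kq)\in G$ of the desired shape, matching the stated main term $c\,\widehat{f}(kq)(\phi(k)/k)^2 xy/q^2$.

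The main obstacle is the error bookkeeping, where the $q^{3}$ exponent and the $\min(x,y)^{-1/2+\epsilon}$ saving must be extracted simultaneously. I would truncate the M\"obius sum at a parameter $D\leq\min(x,y)$; for $d>D$ the trivial bound $f_i(n)\ll n^\epsilon$ gives a contribution of order $(xy/D)^{1+\epsilon}$, while for $d\leq D$ the three error-type products arising from Lemma~\ref{tors} (main$\times$err, err$\times$main, err$\times$err) sum to
\[
\ll x^{\epsilon}y^{\epsilon}\,\tau^{4}(kq)\sum_{d\leq D}\frac{1}{d^{3/2-\epsilon}}\Bigl(\frac{y\,x^{1/2}}{q^{1/2}}+\frac{x\,y^{1/2}}{q^{1/2}}+(xy)^{1/2}q\Bigr),
\]
using the Weil-type bound $\tau^2(dkq)\ll d^\epsilon(kq)^\epsilon$. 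Assuming without loss of generality that $x\leq y$, so $\min(x,y)=x$, optimising $D=x^{1/2}$ balances the tail and the truncated error and produces the bound $xy\,x^{-1/2+\epsilon}q^{3}k^{\epsilon}$; the $q^3$ is forced by the err$\times$err cross term, which dominates when $q$ is large. All other error contributions are dominated by this one, completing the proof.
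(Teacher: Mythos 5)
Your decomposition is exactly the paper's: M\"obius inversion over $d=\gcd(s,t)$ (with $\gcd(d,kq)=1$ forced precisely as you argue), Lemma~\ref{tors} applied to each inner one-dimensional sum with coprimality parameter $dk$ and shifted residues $\bar{d}\sigma,\bar{d}\tau$, and the resulting $d$-series extended to infinity and compared Euler factor by Euler factor to produce the constant $c$ and the function $\widehat{f}(kq)\in G$. The main-term treatment is correct and matches the paper essentially verbatim.

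The one place where your write-up does not quite deliver the stated bound is the error bookkeeping. The truncation at $D$ together with the trivial tail estimate $(xy/D)^{1+\epsilon}$ is both unnecessary and lossy: that bound carries a factor $(xy)^{\epsilon}$, hence $y^{\epsilon}$ with $y=\max(x,y)$, and when $y$ exceeds any fixed power of $x$ (with $q,k$ bounded) this is not absorbed by the permitted error $xy\,\min(x,y)^{-1/2+\epsilon}q^{3}k^{\epsilon}$, whose $\epsilon$-room sits only in $\min(x,y)$. The remedy is simply to drop the truncation, which is what the paper does: summing the Lemma~\ref{tors} error products over all $d\leq \min(x,y)$ already converges, the main-times-error terms decaying like $d^{-3/2+\epsilon}$ and the error-times-error term like $d^{-1-2\epsilon}$ (not $d^{-3/2+\epsilon}$ as you wrote --- a second small slip, harmless since the sum still converges), and one lands on a total error $O_{\epsilon}\!\left(xy\,\min(x,y)^{-1/2+\epsilon}q^{2+\epsilon}k^{\epsilon}\right)$, comfortably within the statement. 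Relatedly, the exponent $3$ of $q$ is not ``forced'' by the error-times-error term, which contributes only about $q^{1+\epsilon}$; it is simply generous room in the statement, the paper's own proof yielding $q^{2+\epsilon}$.
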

\begin{proof}
As in the proof of the previous lemma,
there exist $\delta_1, \delta_2 \in \{0,1\}$
such that the following quantity is well--defined
\[M\!:=1+\max_{i=1,2} \ \sup_{p}\Big|p\l(f\l(p\r)-1-
\delta_i\chi_3\l(p\r)\r)\Big|.\]
We may assume $y\leq x$ without loss of generality.
Using 
M\"{o}bius inversion
to deal with the coprimality of $s$ and $t$
implies that the sum appearing in the lemma equals
\[\sum_{\substack{
m \leq  y/2 \\
\gcd(m,kq)=1
}}
\hspace{-0.5cm}
\mu\!\l(m\r)
f_1\!\l(m\r)
f_2\!\l(m\r)
\hspace{-0.3cm}
\sum_{\substack{
s \leq y/m \\
\gcd(s,km)=1 \\
s \ \text{squarefree} \\
s m\equiv \sigma \hspace{-0.12cm} \mod{\!q}
}}
\hspace{-0.5cm}
f_1\!\l(s\r)
\sum_{\substack{
t \leq x/m \\
\gcd(t,km)=1 \\
t \ \text{squarefree} \\
t m\equiv \tau \hspace{-0.12cm} \mod{\!q}
}}
\hspace{-0.5cm}
f_2\!\l(t\r).
\]
Let us observe that the previous lemma supplies the estimates
\begin{equation}
\label{bnd}
\mu^2\!\l(n\r)\!f_i\!\l(n\r),\mu^2\!\l(n\r)\!\widehat{f}_{i}\!\l(n\r)
\ll_{\epsilon,M}\!n^{\epsilon},
|c_{i}|\!
\ll_{M}\!1 \ \text{for} \ i=1,2,
\end{equation}
valid for any $\epsilon\!>\!0,$ due to the definition of
$G$
and $\eqref{eq:1}.$
We are
therefore
provided with an error term
bounded by $\ll_{\epsilon,M}
xy^{\frac{1}{2}+\epsilon}
q^{2+\epsilon}k^{\epsilon}$
and with the main term
\begin{align*}
&c_{1}c_{2}
\
\widehat{f_1}\!\l(kq\r)
\widehat{f_2}\!\l(kq\r)
\l(\!\frac{\phi\!\l(k\r)}{k}\!\r)^2
\!\frac{xy}{q^2}
\ \times \\
&\times \!
\sum_{\substack{
m \leq  y/2 \\
\gcd(m,kq)=1
}}
\hspace{-0.5cm}
\frac{\mu\l(m\r)}{m^2}
{f_1}\!\l(m\r)
{f_2}\!\l(m\r)
\widehat{f_1}\!\l(m\r)
\widehat{f_2}\!\l(m\r)
\l(\!\frac{\phi\!\l(m\r)}{m}\!\r)^2
.
\end{align*}
The bounds~\eqref{bnd}
enable us to extend the summation over $m$
to infinity, 
introducing a negligible error.
Comparing the ensuing Euler factors
concludes the proof of the lemma.
\end{proof}

\section{Proof of Theorem~\ref{bonn}}
\label{s5}
We dedicate this section to the proof 
of~\eqref{marl1}.
Denote the summation
over  residue classes $\sigma,\tau \! \in \! [0,d)$
for which $\gcd(\sigma,\tau,d)=1$
by 
$\Osum_{\hspace{-0.2cm}\l(\sigma,\tau\r)\hspace{-0.12cm} \mod{\!d}}.$
Defining the function
$\u\!\l(n\r)\!
:=
\!\mu^2\!\l(n\r)
\prod_{p|n}\!\l(1-\frac{2}{p}\r)$
allows us to write
$\div\!\l(n\r)\!=\!\sum_{d|n} \widetilde{1}\!\l(d\r).$
We insert this into $D\!\l(x\r)$
and 
invert the order of summation. The
non--negativity 
of the values assumed by 
$\u$
enables us to restrict the summation,
arriving at
\begin{equation}
\label{fugue}
D\!\l(x\r)
\geq
\hspace{-0.3cm}
\Osum_{\substack{d_1,d_2\leq x^{\epsilon}
\\ \gcd(d_1d_2,6)=1
}}
\hspace{-0.2cm}
\u\l(d_1d_2\r)
\Osum_{\substack{(\sigma,
\tau)
\hspace{-0.12cm} \mod{\!8 d_1 d_2}
\\
\sigma,\tau\equiv 1 \hspace{-0.12cm}\mod{\!8}
\\
d_1|\sigma^2
+\sigma\tau  +
\tau^2 
\\
d_2|\sigma-\tau
\\
}}
\Osum_{\substack{
\l(s,t\r) \in \B x
\\
s,t \ \text{squarefree}
\\
\l(s,t\r)\equiv
\l(\sigma,\tau\r)
\hspace{-0.12cm}
\mod{\!8d_1d_2}
}}
\hspace{-0.7cm}
\rr\l(st\r),
\end{equation}
valid for any $0\!<\!\epsilon\!<\!1.$
We use Lemma~\ref{coprim}
with
$f_1=f_2=\rr$
to estimate the sum over $s$ and $t.$
We are allowed to do so
since $\phi\l(n\r)\!/ n$ is an element of the group $G$
and therefore $\rr$ satisfies the hypotheses of 
Lemma~\ref{tors}.  
We are thus led to
\[\Osum_{\substack{
\l(s,t\r) \in \B x
\\
s,t \ \text{squarefree}
\\
\l(s,t\r)\equiv
\l(\sigma,\tau\r)
\hspace{-0.12cm}
\mod{\!8d_1d_2}
}}
\hspace{-0.7cm}
\rr\l(st\r)=c_0 \
f_0\l(d_1d_2\r)
\frac{x^2}{d_1^2d_2^2}
+O_{\epsilon}\l(x^{\frac{7}{4}+\epsilon}d_1^3d_2^3\r)
\]
for any $\epsilon\!>\!0,$
where $f_0 \in G$
and $c_0$ is an absolute positive constant.
Taking into account that
the number of
solutions of
$x^2+x+1
\!\equiv\! 0
\hspace{-0.12cm}\mod{\!p}$
equals $1\!+\!\l(\frac{-3}{p}\r)$
for any prime $p$
and inserting
the previous estimate
into~\eqref{fugue}
leads to
\begin{equation}
\label{theme}
D\l(x\r)
\geq
c_1 x^2
\hspace{-0.2cm}
\Osum_{\substack{d_1,d_2\leq x^{\epsilon}
\\ \gcd(d_1d_2,6)=1
}}
\hspace{-0.2cm}
\frac{\u\!\l(d_1d_2\r)}{d_1 d_2}
f_0\!\l(d_1d_2\r)
\prod_{p|d_1}
\l(1+\chi_3\!\l(p\r)\r)
+O_\epsilon\l(x^{\frac{7}{4}+10\epsilon}\r),
\end{equation}
where $c_1$ is an absolute positive constant.
We observe that the functions 
$\u$ and $f_0$
are elements of the group $G$
and so does their product.
Hence  the functions
\[f_1\!\l(n\r)=
\u\!\l(n\r)
f_0\!\l(n\r)
\prod_{p|n}
\l(1+\chi_3\!\l(n\r)\r)
, 
\quad
f_2\!\l(n\r)=\u\!\l(n\r)
f_0\!\l(n\r),
\]
fulfill the hypotheses of
Lemma~\ref{coprim}.
Therefore for any $\epsilon\!>\!0$
we obtain
\[
\Osum_{\substack{
y/2<d_1\leq y
\\
x/2<d_2 \leq  x
\\
\gcd(d_1 d_2,6)=1}}
\hspace{-0.2cm}
\frac{\u\!\l(d_1d_2\r)}{d_1 d_2}
f_0\!\l(d_1d_2\r)
\prod_{p|d_1}
\l(1+\chi_3\!\l(p\r)\r)
=
c_2
xy
+O_{\epsilon}\l(\frac{xy}{\min^{\frac{1}{4}-\epsilon}(x,y)}\r)
\]
with an absolute positive constant $c_2.$
Partitioning
in dyadic intervals
shows that
the sum appearing in~\eqref{theme} is larger than
\[\sum_{\ \ \ 1\leq i \leq j \leq \epsilon \log_2 x}
\hspace{-0.5cm}
2^{-i-j}
\hspace{-0.3cm}
\Osum_{\substack
{2^{i-1}<d_1\leq 2^{i}
\\
2^{j-1}<d_2\leq  2^{j}
\\
\gcd(d_1 d_2,6)=1
}}
\hspace{-0.4cm}\u\!\l(d_1d_2\r)
\frac{\u\!\l(d_1d_2\r)}{d_1 d_2}
f_0\!\l(d_1d_2\r)
\prod_{p|d_1}
\l(1+\chi_3\!\l(p\r)\r).
\]
Using the previous estimate for
each inner sum
proves~\eqref{marl1}
from which Theorem~\ref{bonn} follows.

\section{Proof of Theorem~\ref{batt}}
\label{s6}
Recall the definition of $Y$ in~\eqref{btt}.
The anticanonical divisor on $Y$ 
is $-K_Y=\c{O}(3,1)$
which provides the height $H$
defined as follows.
For a point 
$\l(x,y\r) \in \P_{\Q}^3\times \P_{\Q}^3,$
we choose $\b{x}, \b{y} \in \Zp^4,$
unique up to sign, so that
$
\l(x,y\r)
=
\l([\b{x}],
[\b{y}]
\r)
$ and we let
\[H\l(x,y\r) := 
|\b{x}|^3
|\b{y}|
,\]
where $|\cdot|$
denotes the usual supremum norm in $\R^4.$
The counting function, defined in~\eqref{nam0},
takes the following shape.
For any Zariski open subset $U$ of $Y$
we set
\begin{equation}
\label{lms}
N\!\l(U,B\r)=\frac{1}{4}
\#\l\{\b{x},\b{y} \in \Zp^4,
\l([\b{x}],
[\b{y}]\r) \in U,
|\b{x}|^3
|\b{y}|\leq B
\r\}.
\end{equation}
Define the map $\pp:Y \to \P^3$
by $\pp\l(\b{x},\b{y}\r)=\b{x}.$
The image of $U$
under $\pp$
forms a Zariski open set
and it therefore
intersects
the Zariski dense subset of $\P_\Q^3$
given by
\[\l\{[t_0^3:\ldots:t_3^3]:t_0,\ldots,t_3 \in \Q^*
\r\}.\]
Letting $Y_\b{t}$ stand for the 
corresponding cubic surface
\[Y_\b{t}: \ \ \sum_{i=0}^3
t_i^3
y_i^3=0,
\]
we are provided with some $\b{t}=\l(t_0,\ldots,t_3\r) \in \Z_{prim}^4,$
with $\prod_{i=0}^3 t_i\neq 0,$ such that
$U_{\b{t}}:=U \cap Y_\b{t} $ is non--empty.
We fix the choice of $\b{t}$ 
for the rest of this section.
The surface $Y_\b{t}$ is irreducible
since $\prod_{i=0}^3 t_i\neq 0,$ 
which implies that
the closed subvariety
$Y_\b{t}\setminus U_\b{t}$ is a finite union
$\cup_{i=1}^r C_i$
of curves or points in $\P^3.$
Letting
\[N\!\l(U_\b{t},B\r)=\#\l\{\b{y} \in U_\b{t}\l(\Q\r), H\l(\b{y}\r)\leq 
B
\r\}
\]
we deduce that
\[
N\!\l(U,B\r)
\gg
N\!\l(U_\b{t},\frac{B}{\ |\b{t}|^9}\r)
\]
for all
$B\geq |\b{t}|^9.$
It therefore
suffices for the purpose of establishing Theorem~\ref{batt}
to prove
the lower bound
\begin{equation}
\label{batnk}
N\!\l(U_\b{t},B\r) \gg B \l(\log B\r)^3
\end{equation} for all $B \geq 3.$
We can therefore assume without loss of generality 
that
$U_\b{t}$ contains none of the lines of $Y_\b{t}.$
Letting
$Y'_\b{t}$ denote the Zariski open subset of $Y_\b{t}$
where the lines have been excluded,
we deduce that
\[
N\!\l(U_\b{t},B\r)
\geq
N\!\l(Y'_\b{t},B\r)
-\sum_{\substack{ 1\leq i \leq r \\ 
C_i
\neq \text{line}}}
N\!\l(C_i,B\r).
\]
Hence if $C_i$ is a curve, its degree will be at least $2$
in which case Theorem 1.1 in \cite{walsh} shows that $N\!\l(C_i,B\r)\ll_{C_i} B.$
Noting that this estimate
trivially holds if $C_i$ is a point,
we have shown that
\begin{equation}
\label{inclusion}
N\!\l(U_\b{t},B\r)
\geq
N\!\l(Y'_\b{t},B\r)
+O\!\l(B\r),
\end{equation}
where the implied constant depends at most on $r,\b{t}$ and $C_i$.
\begin{lem}
\label{fer}
We have 
\[N\!\l(Y'_\b{t},B\r) \gg_{\b{t}}
B \l(\log B\r)^3\]
for all $B \geq 3.$
\end{lem}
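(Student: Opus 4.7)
The plan is to reduce the lemma to Theorem~\ref{bonn} via the $\Q$-linear isomorphism
$\phi:Y_\b{t}\to F$ given by
$\phi\l([y_0\!:\!\ldots\!:\!y_3]\r)=[t_0 y_0\!:\!\ldots\!:\!t_3 y_3],$
which is well defined because $\prod_i t_i\neq 0.$
As a diagonal linear change of coordinates over $\Q,$ $\phi$ identifies the $27$ lines of
$Y_\b{t}$ with those of $F,$ preserving the field of definition in each case; in particular
the $3$ rational lines correspond, and $\phi$ restricts to a bijection between
$Y'_\b{t}\l(\Q\r)$ and the set of $\Q$-points of $F$ lying off all $27$ lines.

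The main technical step is to translate between primitive integer representatives on each
side. Given $\b{z}\in\Zp^4$ with $F\l(\b{z}\r)=0,$ set
\[
L\l(\b{z}\r):=\mathrm{lcm}\!\l(\tfrac{t_0}{\gcd\l(t_0,z_0\r)},\ldots,\tfrac{t_3}{\gcd\l(t_3,z_3\r)}\r),
\qquad y_i:=\frac{L\l(\b{z}\r)z_i}{t_i}.
\]
Then $\b{y}\in\Z^4$ satisfies $\sum_{i=0}^3 t_i^3 y_i^3=0,$ and a short
$p$-adic calculation shows that $\b{y}\in\Zp^4.$ Conversely, every primitive $\b{y}\in\Zp^4$
on $Y_\b{t}$ arises this way: one recovers $\b{z}$ via
$k:=\gcd\l(t_0 y_0,\ldots,t_3 y_3\r),$ $z_i:=t_i y_i/k,$ after checking that $k=L\l(\b{z}\r).$
Since $1\leq L\l(\b{z}\r)\leq \mathrm{lcm}\l(t_0,\ldots,t_3\r)\ll_\b{t} 1,$ the heights
compare as $|\b{y}|\asymp_\b{t}|\b{z}|.$

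To conclude, note that each of the $24$ non-rational lines of $F,$ defined over
$\Q\l(\sqrt{-3}\r),$ meets its Galois conjugate in at most one $\Q$-rational point, so
they jointly carry $O\l(1\r)$ points of $F\l(\Q\r).$ Hence Theorem~\ref{bonn} yields
$\gg B\l(\log B\r)^3$ primitive $\b{z}\in\Zp^4$ on $F$ lying off all $27$ lines with
$|\b{z}|\leq B.$ Under the bijection above these correspond to $\gg B\l(\log B\r)^3$
primitive $\b{y}\in\Zp^4$ representing points of $Y'_\b{t}\l(\Q\r)$ with
$H\l(\b{y}\r)=|\b{t}|^3|\b{y}|\leq C_\b{t}|\b{z}|$ for a suitable constant $C_\b{t};$
rescaling $B$ by $C_\b{t}$ yields the lemma. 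The only subtle point is the primitivity of
$\b{y},$ which reduces to noting that the primitivity of $\b{z}$ forces
$\max_i\l(v_p\l(t_i\r)-v_p\l(z_i\r)\r)\geq 0$ at every prime $p,$ ensuring that
$v_p\l(L\l(\b{z}\r)\r)$ exactly cancels the deficit at the coordinate attaining this maximum.
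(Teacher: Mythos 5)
Your proposal is correct and follows essentially the same route as the paper: transfer the points supplied by Theorem~\ref{bonn} from the Fermat cubic to $Y_\b{t}$ via the diagonal change of variables $y_i=z_i/t_i$ defined over $\Q$, noting that the height changes only by a factor depending on $\b{t}$; your lcm construction of the primitive representative, the injectivity check, and the observation that the $24$ lines over $\Q\l(\sqrt{-3}\r)$ carry only $O\l(1\r)$ rational points merely make explicit details the paper glosses over. The only blemish is the identification $H\l(\b{y}\r)=|\b{t}|^3|\b{y}|$ (the paper's height on $U_\b{t}$ is the naive height of the primitive $\b{y}$, and the full bundle height would contribute $|\b{t}|^9$), but since the lemma's implied constant may depend on $\b{t}$ this discrepancy is harmless.
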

\begin{proof}
Let $T:=|t_0t_1t_2t_3|\neq 0$
and recall that $N\!\l(B\r)$ 
denotes the counting function
associated to the Fermat cubic surface.
Alluding to Theorem~\ref{bonn}
implies that
it suffices to prove
that
\begin{equation}
\label{one}
N\!\l(Y'_\b{t},B\r) 
\geq
N\!\l(\frac{B}{T}\r)
\end{equation}
for all $B\geq T.$
For any
$\b{z} \in \Zp^4$ 
counted by
$N\!\l(B\r)$ define $[\b{y}] \in \mathbb{P}_{\Q}^3$
via $y_i:=\frac{z_i}{t_i}$
and notice that $\b{y}$ lies on $Y'_\b{t}.$
We have $$[\b{y}]=
[z_0t_1t_2t_3:\ldots:z_3t_0t_1t_2]$$
and we observe that for $d:=\gcd(z_0t_1t_2t_3,\ldots,z_3t_0t_1t_2),$
we get
\[H\!\left(\b{y}\right)=\frac{\max\l\{|z_0t_1t_2t_3|,\ldots,|z_3t_0t_1t_2|\r\}}{d}
\leq BT,
\]
which proves
\eqref{one}.
\end{proof}
Combining the estimate
~\eqref{inclusion}
and Lemma~\ref{fer}
proves~\eqref{batnk}
from which the validity of
Theorem~\ref{batt} is inferred.

\bibliographystyle{amsalpha}
\bibliography{fermat}

\providecommand{\bysame}{\leavevmode\hbox to3em{\hrulefill}\thinspace}
\providecommand{\MR}{\relax\ifhmode\unskip\space\fi MR }
\providecommand{\MRhref}[2]{%
  \href{http://www.ams.org/mathscinet-getitem?mr=#1}{#2}
}
\providecommand{\href}[2]{#2}
\begin{thebibliography}{FMT89}

\bibitem[BL13]{timdan}
T.~D. Browning and D.~Loughran, \emph{Varieties with too many rational points},
  arXiv:1311.5755 (2013).

\bibitem[Bro07]{gauss}
T.~D. Browning, \emph{An overview of {M}anin's conjecture for del {P}ezzo
  surfaces}, Analytic number theory, Clay Math. Proc., vol.~7, Amer. Math.
  Soc., Providence, RI, 2007, pp.~39--55.

\bibitem[Bro09]{quant}
\bysame, \emph{Quantitative arithmetic of projective varieties}, Progress in
  Mathematics, vol. 277, Birkh\"auser Verlag, Basel, 2009.

\bibitem[BT96]{bat}
V.~V. Batyrev and Y.~Tschinkel, \emph{Rational points on some {F}ano cubic
  bundles}, C. R. Acad. Sci. Paris S\'er. I Math. \textbf{323} (1996), no.~1,
  41--46.

\bibitem[dlBB07]{bret}
R.~de~la Bret{\`e}che and T.~D. Browning, \emph{On {M}anin's conjecture for
  singular del {P}ezzo surfaces of degree 4. {I}}, Michigan Math. J.
  \textbf{55} (2007), no.~1, 51--80.

\bibitem[Erd39]{erdos}
P.~Erd{\"o}s, \emph{On the integers of the form {$x^k+y^k$}}, J. London Math.
  Soc. \textbf{14} (1939), 250--254.

\bibitem[FMT89]{fmt}
J.~Franke, Y.~I. Manin, and Y.~Tschinkel, \emph{Rational points of bounded
  height on {F}ano varieties}, Invent. Math. \textbf{95} (1989), no.~2,
  421--435.

\bibitem[HB97]{43}
D.~R. Heath-Brown, \emph{The density of rational points on cubic surfaces},
  Acta Arith. \textbf{79} (1997), no.~1, 17--30.

\bibitem[Hoo63]{hooley0}
C.~Hooley, \emph{On the representations of a number as the sum of two cubes},
  Math. Z. \textbf{82} (1963), 259--266.

\bibitem[Hoo80]{hooley}
\bysame, \emph{On the numbers that are representable as the sum of two cubes},
  J. Reine Angew. Math. \textbf{314} (1980), 146--173.

\bibitem[IK04]{iwa}
H.~Iwaniec and E.~Kowalski, \emph{Analytic number theory}, American
  Mathematical Society Colloquium Publications, vol.~53, American Mathematical
  Society, Providence, RI, 2004.

\bibitem[Lou12]{dan}
D.~Loughran, \emph{Rational points of bounded height and the {W}eil
  restriction}, arXiv:1210.1792 (2012).

\bibitem[LR13]{cecile}
C.~Le~Rudulier, \emph{Points alg\'{e}briques de hauteur born\'{e}e sur une
  surface}, In preparation (2013).

\bibitem[Pey95]{peyre}
E.~Peyre, \emph{Hauteurs et mesures de {T}amagawa sur les vari\'et\'es de
  {F}ano}, Duke Math. J. \textbf{79} (1995), no.~1, 101--218.

\bibitem[Sof13]{sofos}
E.~Sofos, \emph{Uniformly counting rational points on conics}, arXiv:1305.0374
  (2013).

\bibitem[SSD98]{slat}
J.~B. Slater and P.~Swinnerton-Dyer, \emph{Counting points on cubic surfaces.
  {I}}, Ast\'erisque (1998), no.~251, 1--12, Nombre et r{\'e}partition de
  points de hauteur born{\'e}e (Paris, 1996).

\bibitem[Wal13]{walsh}
M.~N. Walsh, \emph{Bounded rational points on curves}, arXiv:1308.0574 (2013).

\bibitem[Woo95]{wooley}
T.~D. Wooley, \emph{Sums of two cubes}, Internat. Math. Res. Notices (1995),
  no.~4, 181--184.

\end{thebibliography}
\end{document}